\newcommand\diag{\operatorname{diag}}
 \newcommand\id{\mathrm{id}}
 \newcommand\tr{\mathrm{tr}}
 \newcommand\bop{\bigoplus}
 \newcommand\op{\oplus}
 \newcommand\fann{\mathfrak{ann}}
 \newcommand\ba{{\mathbf a}}
 \newcommand\bu{{\mathbf u}}
 \newcommand\im{\mathrm{im}}
 \newcommand\fa{{\mathfrak a}}
 \newcommand\fg{{\mathfrak g}}
 \newcommand\fgl{\mathfrak{gl}}
 \newcommand\fh{{\mathfrak h}}
 \newcommand\fm{{\mathfrak m}}
 \newcommand\fp{{\mathfrak p}}
 \newcommand\fq{{\mathfrak q}}
 \newcommand\fs{{\mathfrak s}}
 \newcommand\fsl{\mathfrak{sl}}
 \newcommand\fsp{\mathfrak{sp}}
 \newcommand\fz{{\mathfrak z}}
 \newcommand\fC{{\mathfrak C}}
 \newcommand\fG{{\mathfrak G}}
 \newcommand\fP{{\mathfrak P}}
 \newcommand\fX{{\mathfrak X}}
 \newcommand\cC{{\mathcal C}}
 \newcommand\cE{{\mathcal E}}
 \newcommand\cG{{\mathcal G}}
 \newcommand\cN{{\mathcal N}}
 \newcommand\cS{{\mathcal S}}
 \newcommand\cW{{\mathcal W}}
 \newcommand\sfe{\mathsf{e}}
 \newcommand\sfv{\mathsf{v}}
 \newcommand\sfx{\mathsf{x}}
 \newcommand\sfy{\mathsf{y}}
 \newcommand\sfH{\mathsf{H}}
 \newcommand\sfX{\mathsf{X}}
 \newcommand\sfY{\mathsf{Y}}
 \newcommand\sfZ{\mathsf{Z}}
 \newcommand\bbE{{\mathbb E}}
 \newcommand\bbP{{\mathbb P}}
 \newcommand\bbR{{\mathbb R}}
 \newcommand\bbZ{{\mathbb Z}}
 \newcommand\tspan{\mathrm{span}}
 \newcommand\Ben{\begin{enumerate}}
 \newcommand\Een{\end{enumerate}}
 \newcommand\Bex{\begin{example}}
 \newcommand\Eex{\end{example}}
 \def\inj{\hookrightarrow}
\def\tGL{\text{GL}}
 \newcommand\ad{{\rm ad}}
 \newcommand\Ad{{\rm Ad}}
 \newcommand\Aut{\text{Aut}}
 \newcommand\tgr{\mathrm{gr}}
 \def\assoc/{associative}
 \def\arb/{arbitrary}
 \def\btw/{between}
 \def\coeff/{coefficient}
 \def\cohom/{cohomology}
 \def\coord/{coordinate}
 \def\coordsys/{coordinate system}
 \def\cpt/{compact}
 \def\cred/{completely reducible}
 \def\cts/{continuous}
 \def\dga/{differential-graded algebra}
 \def\dR/{de Rham}
 \def\Euc/{Euclidean} 
 \def\grp/{group}
 \def\hom/{homomorphism}
 \def\inv/{invariant}
 \def\iso/{isomorphism}
 \def\La/{Lie algebra}
 \def\Lag/{Lagrangian Grassmannian}
 \def\LG/{Lie group}
 \def\MA/{Monge--Amp\`ere}
 \def\MC/{Maurer--Cartan}
 \def\lintr/{linear transformation} 
 \def\mfld/{manifold}
 \def\nb/{normal bundle}
 \def\nbd/{neighbourhood}
 \def\nondeg/{non-degenerate}
 \def\posdef/{positive definite}
 \def\pu/{partition of unity}
 \def\rep/{representation}
 \def\Riem/{Riemannian}
 \def\sg/{subgroup}
 \def\ss/{semi-simple}
 \def\inv/{invariant}
 \def\irr/{irreducible}
 \def\Jacid/{Jacobi identity}
 \def\li/{linearly independent}
 \def\nd/{nowhere dependent}
 \def\nz/{nowhere zero}
 \def\on/{orthonormal}
 \def\onb/{\on/ basis}
 \def\orc/{\orth/ complement}
 \def\orth/{orthogonal}
 \def\orp/{\orth/ projection}
 \def\pde/{partial differential equation}
 \def\resp/{respectively}
 \def\seq/{sequence}
 \def\std/{standard}
 \def\SW/{Stiefel-Whitney}
 \def\uc/{universal cover}
 \def\vb/{vector bundle}
 \def\vf/{vector field}
 \def\vs/{vector space}
 \def\wrt/{with respect to}
 \renewcommand\mod{\,{\rm mod}\ }
 \newcommand\qbox[1]{\quad\mbox{#1}\quad}
 \renewcommand\dim{{\rm dim}}
 \newtheorem{theorem}{Theorem}[section]
\newtheorem{lemma}[theorem]{Lemma}
\newtheorem{cor}[theorem]{Corollary}
\newtheorem{prop}[theorem]{Proposition}
\theoremstyle{defn}
\newtheorem{defn}[theorem]{Definition}
\newtheorem{example}[theorem]{Example}
\theoremstyle{remark}
\newtheorem{remark}[theorem]{Remark}
\numberwithin{equation}{section}
\newenvironment{psmallmatrix}
  {\left(\begin{smallmatrix}}
  {\end{smallmatrix}\right)}
 \newcommand\LT{\operatorname{LT}}
 \newcommand\hor{\operatorname{hor}}
 \date{\today}
\title{On C-class equations}
 \author{Andreas \v{C}ap}
 \address{Fakult\"at f\"ur Mathematik, Universit\"at Wien, Oskar-Morgenstern Platz 1, 1090 Wien, Austria}
 \email{andreas.cap@univie.ac.at}
 \author{Boris Doubrov}
 \address{Faculty of Mathematics and Mechanics, Belarusian State 
University, Nezavisimosti ave. 4, 220050, Minsk, Belarus}
 \email{doubrov@bsu.by}
 \author{Dennis The}
 \address{
 Department of Mathematics and Statistics, UiT The Arctic University of Norway, N-9037, Troms\o, Norway; 
 Fakult\"at f\"ur Mathematik, Universit\"at Wien, Oskar-Morgenstern Platz 1, 1090 Wien, Austria} 
 \email{dennis.the@uit.no}
\subjclass[2010]{Primary: 53C10, 53C15, 53B15, 34C14; Secondary: 34A34, 53A40, 53A55}
\keywords{ODE, C-class, Cartan connection, harmonic curvature, $GL_2$-structure, Wilczynski invariants}
\begin{document}
\begin{abstract}
 The concept of a C-class of differential equations goes back to E. Cartan with the upshot that generic equations in a C-class can be solved without integration.  
 While Cartan's definition was in terms of differential invariants being first integrals, 
 all results exhibiting C-classes that we are aware of are based on the fact that a canonical Cartan geometry associated to the equations in the class descends to the space
of solutions. For sufficiently low orders, these geometries belong 
to the class of parabolic geometries and the results follow from 
the general characterization of geometries descending to a twistor
space. 

In this article we answer the question of whether a canonical Cartan 
geometry descends to the space of solutions in the remaining cases of scalar ODE 
of order at least four and of systems of ODE of order at least three. 
As in the lower order cases, this is characterized by the vanishing of the
generalized Wilczynski invariants, which are defined via the linearization
at a solution. The canonical Cartan geometries (which are not parabolic
geometries) are a slight variation of those available in the literature
based on a recent general construction. All the verifications needed to 
apply this construction for the classes of ODE we study are carried out 
in the article, which thus also provides a complete alternative proof for
the existence of canonical Cartan connections associated to higher order
(systems of) ODE.

\end{abstract}

\maketitle

 \section{Introduction}

 Consider a (system of) $(n+1)$-st order ODE $\cE$ given by
 \begin{align}
 \bu^{(n+1)} = {\bf f}(t,\bu,\bu',...,\bu^{(n)}), \label{E:ODE}
 \end{align}
 where $\bu^{(k)}$ is the $k$-th derivative of $\bu = (u^1,...,u^m)$ 
 with respect to $t$.  In a short paper \cite{Car1938} in 1938, \'Elie
 Cartan defined the following notion: {\em ``A given class of ODE
   \eqref{E:ODE} will be said to be a C-class if there exists an
   infinite group (in the sense of Lie) $\fG$ transforming equations
   of the class into equations of the class and such that the
   differential invariants with respect to $\fG$ of an equation of the
   class be first integrals of the equation.''}  Here, $\fG$ is a
 prescribed (local) Lie transformation pseudogroup, e.g.\ contact
 transformations $\fC$ or point transformations $\fP$.  (Recall that
 by B\"acklund's theorem, $\fC$ is identified with $\fP$ when $m > 1$,
 but these are distinct in the case of scalar equations.)
 
 Cartan gave two examples of C-classes in the context of: (i) scalar
 3rd order ODE up to $\fC$; (ii) scalar 2nd order ODE up to
 $\fP$. These examples were based on an equivalent description as
 ``espaces g\'en\'eralis\'es''.  In modern language, one represents the equation as a submanifold $\cE$ in an appropriate jet space and endows it with a canonical Cartan geometry $(\cG \to \cE, \omega)$ (see \S \ref{S:Cartan}).  A canonical Cartan connection $\omega$ can be obtained
 using only linear algebra or differentiation via, for example,
 Cartan's method of equivalence. In particular, integration is not
 needed.  Since a Cartan connection provides a distinguished coframing
 on a principal bundle $\cG$ over the ODE $\cE$, differential invariants of the original ODE
 structure arise from the components of its curvature (and its
 covariant derivatives).  If one knows a priori that all differential
 invariants are first integrals, and there are sufficiently many
 functionally independent ones, then these can be used to solve the
 ODE.  Consequently, the utility of searching for C-classes becomes
 readily apparent: {\em generic C-class ODE can be solved without
   integration}.
 
 More recently, R.~Bryant identified in \cite{Bry1991} a C-class
 within 4th order scalar ODE (up to $\fC$), and the concept of 
 torsion-free path geometries (in the sense of Fels--torsion, see
 \cite{Fels1995}) from D.~Grossman's article \cite{Gro2000} describes
 a C-class for 2nd order systems (up to $\fC \cong \fP$).

 The foundations of the geometric study of systems of ODEs of higher order via Cartan connections were developed by N. Tanaka and were published recently as technical reports \cite{Tanaka2017a, Tanaka2017b}. In \cite[Part I, Chapter VII]{Tanaka2017a}, Tanaka gives an interpretation of higher order systems of ODEs as $G_0$ structures on filtered manifolds (cf.\ Section 2 of this paper) and constructs a scalar product to define the normalization conditions for the associated Cartan connection (cf.\ Section 3). The detailed exposition of this approach can be found in \cite{DKM1999}. In \cite{Tanaka2017b}, Tanaka also establishes the foundations for the integration of what he calls ``foliated'' Cartan connections.

 As shown in \cite{DKM1999}, both scalar ODE (of order at least 3) up
 to $\fC$ and systems of ODE (of order at least 2) up to $\fC \cong
 \fP$ admit an equivalent description via a canonical Cartan geometry\footnote{It is well known that all scalar 2nd order ODE are (locally) equivalent up to $\fC$.  Regarding them up to $\fP$ also leads to a canonical Cartan geometry, but this is exceptional from the point of view of our formulations, so will be henceforth be excluded in this article.}
 $(\cG \to \cE, \omega)$ of type $(G,P)$ for an appropriate Lie group
 $G$ and closed subgroup $P\subset G$. (We caution that the existence
 of canonical Cartan connections with respect to an arbitrary
 pseudo--group $\fG$ is not known.) In the geometric description of
 the ODE $\cE$, the solution space $\cS$ corresponds to the space of
 integral curves in $\cE$ of a certain distinguished line field
 $E\subset T\cE$, i.e.\ $\cS \cong \cE / E$. On the homogeneous model
 $G/P$ of the geometry, the space $\cS$ is given as $G/Q$ for a
 subgroup $Q\subset G$ containing $P$.  Hence, a natural question
 arises: for the given ODE, does the canonical Cartan geometry $(\cG
 \to \cE, \omega)$ of type $(G,P)$ descend to a Cartan geometry $(\cG
 \to \cS, \omega)$ of type $(G,Q)$?  If so, then all differential
 invariants of $\omega$ will be well-defined functions on $\cS$, i.e.\
 they will be constant on solutions, hence they are necessarily first
 integrals.  Thus, such ODE $\cE$ will define a C-class. On the other
 hand, this is a natural way to obtain geometric structures on the
 solution space, which are an important topic in the geometric theory
 of differential equations \cite{DT2006,Nur2009,GN2010,Kry2010,DG2012}.
 
 For the cases treated by Cartan and Grossman, the equivalent Cartan
 geometry actually falls into the class of parabolic geometries. In
 this setting, the solution space is a special instance of a twistor
 space of a parabolic geometry and the fundamental question of whether
 a parabolic geometry descends to a twistor space was studied in
 \cite{Cap2005}. It turns out that this depends only on the Cartan
 curvature, and, as observed in \cite{CS2009}, this remains true for
 arbitrary Cartan geometries. For parabolic geometries, there is a
 simpler geometric object than the Cartan curvature, which still is a
 fundamental invariant, namely the so--called {\em harmonic
   curvature}. Using the machinery of Bernstein--Gelfand--Gelfand
 sequences (BGG sequences) from \cite{CSS2001} and \cite{CD2001}, it
 was shown in \cite{Cap2005} that descending of the geometry can be
 characterized in terms of this harmonic curvature. In particular,
 this provides an alternative proof for the results by Cartan and
 Grossman.

 Our goal in this article is to extend the characterization of the
 possibility of descending the Cartan geometry to the solution space
 to higher order cases (which also recovers Bryant's result on
 C-class from \cite{Bry1991}). There are natural candidates for
 relative invariants whose vanishing should characterize this descent,
 namely the {\em generalized Wilczynski invariants}. These
 were introduced in \cite{Dou2008}, where it was shown that their
 vanishing (i.e.\ {\em Wilczynski--flatness}) implies existence of a
 certain geometric structure on the solution space.

 For concreteness, let us recall how the generalized Wilczynski
 invariants are defined.  Consider a linear ODE system:
\[
 \bu^{(n+1)}=P_n(t) \bu^{(n)}+\dots+P_0(t)\bu
\]
up to transformations $(t,\bu)\mapsto (\lambda(t),\mu(t)\bu)$, where
$\mu(t)\in \tGL(m)$. Any such system can be brought to the canonical
Laguerre--Forsyth form defined by: $P_n=0$ and $\tr(P_{n-1})=0$.

As proven by Wilczynski~\cite{Wilc1905} for scalar ODE and
generalized by Se-ashi~\cite{Seashi1988} to systems of ODE, the
following expressions become fundamental invariants for the class of
linear equations (in Laguerre--Forsyth form) and the above class of transformations:
\[
\Theta_r = \sum_{j=1}^{r-1} (-1)^j
\frac{(2r-j-1)!(n-r+j)!}{(r-j)!(j-1)!} P_{n-r+j}^{(j-1)},
\]
for $r=2,\dots, n+1$.  (Observe that $\Theta_2$ is trace--free
  and thus vanishes for scalar ODE.)

\begin{defn}\label{D:Wilc} 
  For \eqref{E:ODE}, the \emph{generalized Wilczynski invariants} $\cW_r$ for $r=2,\dots,n+1$ are defined as the invariants $\Theta_r$
  evaluated at the linearization of the system. Formally, they are
  obtained by substituting each $P_r(t)$ with the matrix
  $\left(\frac{\partial {\bf f}}{\partial \bu^{(r)}}\right)$ and
  replacing the usual derivative by the 
  total derivative 
  \[
  \tfrac{d}{dt} = \tfrac{\partial}{\partial t} + \bu^{(1)} \tfrac{\partial}{\partial \bu} + ... + \bu^{(n)} \tfrac{\partial}{\partial \bu^{(n-1)}} + {\bf f} \tfrac{\partial}{\partial \bu^{(n)}}.
  \]
 \end{defn}

Our main problem thus is to relate the Wilczynski invariants to the
curvature of the canonical Cartan geometry (which is not a parabolic
geometry for higher-order cases) and to prove that vanishing of these
invariants implies the necessary algebraic restrictions on this
curvature. Now it has been known that there is an analogue of harmonic
curvature for the Cartan geometries constructed in \cite{DKM1999}, and
the Wilczynski invariants were identified as certain components of
this harmonic curvature. However, without having the machinery of BGG
sequences at hand, it is very hard to systematically deduce restrictions on the curvature from restrictions on the harmonic curvature.  In the special case of scalar 7th order ODE, this was sorted out in \cite{DG2012} using direct computations that were not reproduced in the article.

To be able to apply BGG--like arguments, we use a small variation
of the canonical Cartan connection from \cite{DKM1999}. This is based
on the recent general construction of canonical Cartan connections
associated to filtered geometric structures in \cite{Cap2016}. This
has the advantage of a simpler characterization of the canonical
Cartan connection and of stronger uniqueness results. All
verifications needed to apply this general theory to the case of
(systems of) ODE are carried out in our article, so we obtain a
complete alternative proof of existence of canonical Cartan
connections associated to (systems of) higher order ODE.

The proof of the main result of this paper (Theorem \ref{T:Wilc-C}) is
based on arguments similar to the ones used in the recent versions of
the BGG machinery, see \S 4.9 and \S 4.10 of \cite{CSo2017}. Together
with the results of Cartan and Grossman from \cite{Car1938} and
\cite{Gro2000} (or the ones from \cite{Cap2005}), we obtain:

\begin{theorem} 
  The following families of equations and pseudogroups form C-classes:
\begin{itemize}
\item scalar ODE of order $\ge 3$ (viewed up to contact transformations) with vanishing generalized Wilczynski invariants;
\item systems of ODE of order $\ge 2$ (viewed up to point transformations) with vanishing generalized Wilczynski invariants.
\end{itemize}
\end{theorem}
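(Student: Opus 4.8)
The plan is to deduce the C-class property from the descending of the canonical Cartan geometry to the solution space, reducing the higher-order cases to the main Theorem~\ref{T:Wilc-C} and the two lowest-order cases to the classical results of Cartan and Grossman.

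First I would isolate the general mechanism that \emph{descending to $\cS$ implies the C-class property}. For an equation $\cE$ in one of the listed families, the canonical Cartan geometry $(\cG\to\cE,\omega)$ of type $(G,P)$ has all of its differential invariants built from the Cartan curvature and its covariant derivatives, so each such invariant is represented by a $P$-equivariant function on $\cG$. If $(\cG\to\cE,\omega)$ descends to a Cartan geometry $(\cG\to\cS,\omega)$ of type $(G,Q)$ along the projection $\cE\to\cS=\cE/E$, then these equivariant functions factor through $\cG\to\cS$, so the associated differential invariants are pullbacks of functions on $\cS$. Being constant along the fibres of $\cE\to\cS$, i.e.\ along the integral curves of $E$ that are the solutions, they are automatically first integrals; hence $\cE$ then belongs to a C-class with respect to the relevant pseudogroup.

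Next I would split by order. For scalar ODEs of order $\ge 4$ and systems of order $\ge 3$, the main Theorem~\ref{T:Wilc-C} asserts that the canonical Cartan geometry descends to $\cS$ exactly when the generalized Wilczynski invariants $\cW_r$ all vanish; combined with the previous paragraph, vanishing of the $\cW_r$ yields the C-class property in these ranges. For the two remaining cases---scalar third-order ODEs up to $\fC$ and second-order systems up to $\fP\cong\fC$---the canonical Cartan geometry is a parabolic geometry, and the C-class property was established directly by Cartan~\cite{Car1938} and by Grossman~\cite{Gro2000}; it also follows from the harmonic-curvature descending criterion of~\cite{Cap2005}. In each of these low-order cases the obstruction to descending is a harmonic curvature component that coincides, up to a nonzero factor, with the surviving Wilczynski invariant, so the hypothesis that the $\cW_r$ vanish is precisely the condition under which the geometry descends. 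Assembling the two ranges establishes both bullet points.

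The substantive difficulty lies not in this reduction but in Theorem~\ref{T:Wilc-C}, and this is the step I expect to be the main obstacle: one must show that vanishing of the Wilczynski invariants---which are only the \emph{harmonic} (lowest-weight) components of the curvature---forces the entire Cartan curvature into the subspace required by the descending criterion of~\cite{CS2009}. Because the geometries in the higher-order range are \emph{not} parabolic, the BGG sequences of~\cite{CSS2001,CD2001} are not directly available; instead I would argue as in \S 4.9--4.10 of~\cite{CSo2017}, combining the algebraic structure of the relevant Lie-algebra cohomology with the Bianchi identity to propagate the vanishing from the harmonic components to the remaining components of the curvature. Once this algebraic propagation is secured, \cite{CS2009} guarantees descending, and the theorem follows.
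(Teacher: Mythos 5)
Your proposal is correct and follows essentially the same route as the paper: the higher-order cases are reduced to Theorem~\ref{T:Wilc-C} via the general principle that descending of the Cartan geometry to $\cS$ makes all curvature-derived invariants first integrals (Definition~\ref{D:C-class} and \S 2.4), while the scalar third-order and second-order-system cases are delegated to Cartan, Grossman, and the parabolic twistor-space criterion of \cite{Cap2005}. Your identification of the BGG-style propagation in Theorem~\ref{T:Wilc-C} as the substantive content is exactly where the paper places the work as well.
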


Let us briefly describe the structure of the paper. In \S 2, we show
that ODE can be described as filtered geometric structures and
analyze the trivial equation to obtain the Lie groups and Lie algebras
needed for a description as a Cartan geometry. We also discuss the
space of solutions and the concept of C-class in this setting
(Definition \ref{D:C-class}). The verifications needed to apply the
constructions of canonical Cartan connections from \cite{Cap2016} are
carried out in \S 3. These are purely algebraic, partly using
finite--dimensional representation theory. In the end of the section,
we give examples of homogeneous C-class ODE.  In \S 4, we
relate the Wilczynski invariants to the curvature of the canonical
Cartan connection and prove our main result. It is worth mentioning
here that not all the filtered
geometric structures of the type we use are obtained from ODE (see
Remark \ref{R:strong-reg} and the example related to $G_2$ in \S
\ref{S:hom-C-class}). Our results continue to hold for these more
general structures, provided one uses the description of Wilczynski
invariants in Theorem \ref{T:Wilc} as a definition in this more
general setting.

 \section{Invariants and C-class via Cartan connections}

 Our results are based on an equivalent description of (systems of)
 ODE as Cartan geometries, which is a variant of the one in
 \cite{DKM1999}. This in turn is derived from an equivalent
 description as a filtered analogue of a G--structure, which we discuss
 first.
 
 \subsection{ODE as filtered $G_0$--structures}
 \label{S:geo-str}
 
 Consider the jet spaces $J^\ell = J^\ell(\bbR,\bbR^m)$, with
 projections $\pi^\ell_k : J^\ell \to J^k$ $(k < \ell)$, and standard
 adapted coordinates $(t,\bu_0,\bu_1,...,\bu_\ell)$, where $\bu_j =
 (u^1_j,...,u^m_j)$ refers to the $j$--th derivative of $\bu(t) =
 (u^1(t),...,u^m(t))$.  For $\ell \geq 1$, the (rank $m+1$) contact subbundle is $C \subset T J^\ell$, which is locally the annihilator of (the
 components of)
 \[
 \theta_0 = d\bu_0 - \bu_1 dt, \quad 
 \theta_1 = d\bu_1 - \bu_2 dt, \quad ..., \quad 
 \theta_{\ell-1} = d\bu_{\ell-1} - \bu_\ell dt.
 \]
 Its weak derived flag yields a filtration by subbundles $C =:\, C^{-1}
 \subset C^{-2} \subset ... \subset C^{-\ell-1} := TJ^\ell$, with
 $C^i$ having corank $m$ in $C^{i-1}$, and
 \[
 C^i = \tspan\{ \partial_t + \bu_1 \partial_{\bu_0} + ... + \bu_{\ell+1+i} \partial_{\bu_{\ell+i}}, \,\, \partial_{\bu_\ell}, \,\, ...,\,\, \partial_{\bu_{\ell+1+i}} \}
 \]
 for $i=-1,...,-\ell$.  The Lie bracket satisfies
 $[\Gamma(C^i),\Gamma(C^j)] \subset \Gamma(C^{i+j})$, and so
 $(J^\ell,\{ C^i \})$ becomes a filtered manifold.  In fact,
 $[\Gamma(C^i),\Gamma(C^j)] \subset \Gamma(C^{\min(i,j)-1})$, which is
 a stronger condition if $i,j \leq -2$.
 
 We will exclusively study ODE under {\em contact transformations}.
 These are diffeomorphisms $\Phi : J^\ell \to J^\ell$ such that
 $\Phi_*(C) = C$.  By B\"acklund's theorem, $\Phi$ is the prolongation
 of a contact transformation on $J^1$.  Moreover, if $m > 1$, the
 latter is the prolongation of a diffeomorphism on $J^0$, i.e.\ a {\em
   point} transformation.
 
 Suppose $n \geq 2$.  The $(n+1)$-st order ODE \eqref{E:ODE}
 corresponds to a submanifold $\cE \subset J^{n+1}$ transverse to
 $\pi^{n+1}_n$, so $\cE$ is locally diffeomorphic to $J^n$.  For $\ell
 = 1,...,n$, the contact subbundle on $J^\ell$ is preserved by contact
 transformations, and its preimage under $\pi^{n+1}_\ell|_\cE$ yields
 a subbundle $T^{\ell - n-1} \cE \subset T\cE$.  The weak derived flag
 of $D := T^{-1} \cE$ also gives rise to these same filtration
 components:
 \begin{align} \label{E:ODE-filtration}
 T^{-1}\cE \subset T^{-2}\cE \subset ... \subset T^{-n}\cE \subset T^{-n-1}\cE := T\cE.
 \end{align}
 As for jet-spaces, $(\cE,\{ T^i \cE \})$ is a filtered manifold with
 \begin{align} \label{E:strong-bracket}
  [\Gamma(T^i \cE), \Gamma(T^j \cE)] \subset \Gamma(T^{\min(i,j)-1}\cE) \subset \Gamma(T^{i+j}\cE).
 \end{align}
 Further, there are distinguished subbundles $E \subset D$ and $F^{i}
 \subset T^{i} \cE$:
 \begin{itemize}
 \item $E = \tspan\{ \frac{d}{dt} := \partial_t + \bu_1
   \partial_{\bu_0} + ... + \bu_n \partial_{\bu_{n-1}} + {\bf f}
   \partial_{\bu_n} \}$ is the annihilator of the
   pullbacks of $\theta_0,...,\theta_n$ on $J^{n+1}$ to $\cE$.
   
 \item $F^{i} = \tspan\{ \partial_{\bu_n}, ..., \partial_{\bu_{n+1+i}}
   \}$ is the (involutive) vertical bundle for
   $\pi^{n+1}_{n+i}|_\cE$.  By B\"acklund's theorem, $F:= F^{-1}
   \subset ... \subset F^{-(n-1)}$ are distinguished; $F^{-n}$ is
   distinguished for $m > 1$.  These give corresponding splittings
   $T^{i} \cE = E \op F^{i}$.
 \end{itemize}

 For $x \in \cE$, define $\fm_i(x) := T^i_x \cE / T^{i+1}_x \cE$,
 and induce a tensorial (``Levi'') bracket on $\fm(x) = \bop_{i < 0}
 \fm_i(x)$ from the Lie bracket of vector fields.  This nilpotent
 graded Lie algebra (NGLA) is the {\em symbol algebra} at $x$ of
 $(\cE,D)$, and its NGLA isomorphism type is independent of $x$,  so let $\fm$ denote a fixed NGLA with $\fm \cong \fm(x)$ for any $x$.
 Moreover, it is the same for all ODE \eqref{E:ODE}, and we describe
 it in \S \ref{S:triv-eq} below.

 All NGLA isomorphisms from $\fm$ to some $\fm(x)$ comprise the total space
 of a natural frame bundle $F_{\tgr}(\cE) \to \cE$.  This has
 structure group $\Aut_{\tgr}(\fm)$, which naturally injects into
 $\tGL(\fm_{-1}) \cong \tGL_{m+1}$ since $\fm_{-1}$ generates all of
 $\fm$, reflecting the fact that $D$ is ``bracket-generating''.  The splitting $D = E
 \op F$ is encoded via reduction to a subbundle $\cG_0 \to \cE$ with
 structure group $G_0 = \bbR^\times \times \tGL_m$ embedded as
 diagonal blocks in $\tGL_{m+1}$.

 Fixing $\fm$ and $G_0 \subset \Aut_{\tgr}(\fm)$ as above, a {\em
   filtered $G_0$-structure} consists of: 
 \begin{itemize}
 \item[(i)] a filtered manifold $(M,\{ T^i M \}_{i < 0})$ whose symbol algebras form a locally trivial bundle with model algebra $\fm$;
 \item[(ii)] a reduction of structure group of $F_{\tgr}(M) \to M$ to a
   principal $G_0$-bundle $\cG_0 \to M$.
 \end{itemize}
 Note that (i) implies that $T^{-1} M$ is of constant rank and
 bracket-generating in $TM$. As described above, any ODE $\cE$ yields a
 filtered $G_0$-structure. These are not the most general instances of
 such structures, however, since the splittings $T^{i} \cE = E \op
 F^{i}$ for $i=-2,...,-(n-1)$ (and for $i=-n$ if $m > 1$) are an
 additional input. The following discussion in fact applies to all
 filtered $G_0$--structures, and not only to those defined by (systems of)
 ODE.
 
 \subsection{The trivial ODE} \label{S:triv-eq}
 
 We exclude the cases of scalar 3rd order ODE and of (systems of) 2nd
 order ODE as these lead to parabolic geometries, which are
 structurally different.  So suppose that $n \geq 2$ and $m \geq 1$, with $(n,m) \neq\,\, (2,1)$.  Then the contact symmetry algebra $\fg$ of
 the trivial ODE $\bu^{(n+1)} = 0$ consists entirely of the
 (prolonged) point symmetries:
 \begin{align} \label{E:triv-sym}
 \partial_{u^a}, \,\, t\partial_{u^a}, \,\, ..., \,\, t^n
 \partial_{u^a}, \,\, \partial_t, \,\, t\partial_t, \,\,
 u^b\partial_{u^a}, \,\, t^2 \partial_t + ntu^a\partial_{u^a},
 \end{align}
 where $1 \leq a,b \leq m$.  Abstractly, $\fg = \fq \ltimes \fa$,
 where $\fq = \fsl_2 \times \fgl_m$ acts on the abelian ideal $\fa =
 V_n \otimes W$, with $V_n = S^n(\bbR^2)$ as an $\fsl_2$-module and $W
 = \bbR^m$.  Take a basis $\{ \sfx,\sfy \}$ on $\bbR^2$ and the standard
 $\fsl_2$-basis
 \[
 \sfX = \sfx \partial_{\sfy}, \quad
 \sfH = \sfx \partial_{\sfx} - \sfy \partial_{\sfy}, \quad
 \sfY = \sfy \partial_{\sfx}.
 \]
 On $V_n$, use the basis $\sfv^i = \frac{1}{i!} \sfx^{n-i} \sfy^i$,
 where $0 \leq i \leq n$.  Let $\{ \sfe_a \}$ and $\{ \sfe^a_b \}$ be
 the standard bases on $\bbR^m$ and $\fgl_m$, which satisfy $\sfe^a_b
 \sfe_c = \delta^a_c \sfe_b$.

 The prolongation to $J^{n+1}$ of \eqref{E:triv-sym} shows that $\fg$
 is infinitesimally transitive on $\cE \subset J^{n+1}$, with
 isotropy subalgebra $\fp \subset \fg$ at $o = \{ t=0, \bu_0 = ... =
 \bu_n = 0 \} \in \cE$ spanned by $2t\partial_t, \,\,
 u^b\partial_{u^a}, \,\, t^2 \partial_t + nt u^a\partial_{u^a}$.
 Abstractly, $\fp$ is spanned by $\sfH, \fgl_m, \sfY$.  The filtration
 \eqref{E:ODE-filtration} induces ($\fp$-invariant) filtrations on
 $\fg / \fp \cong T_o \cE$ and $\fg$:
 \[
 \fg^{-n-1} = \fg\supset \fg^{-n} \dots \supset \fg^{-1} \supset \fg^0
 = \fp \supset \fg^1\supset \{ 0 \},
 \]
 and we put $\fg^i = \{ 0 \}$ for $i \geq 2$, and $\fg^i = \fg$ for $i
 \leq -n-1$.  In particular, $\fg^{-1} / \fp \cong D_o = E_o \op F_o$,
 with $E_o \cong \bbR \sfX$ and $F_o \cong \bbR\sfy^n \otimes W$
 (modulo $\fp$), while $\fg^1 = \bbR\sfY$ is distinguished as those
 elements of $\fp$ whose bracket with $\fg^{-1}$ lies in $\fp$.
 Viewed concretely, $E_o,F_o,\fg^1$ are respectively spanned by (the
 prolongations of) $\partial_t$, $t^n \partial_{u^a}$, and
 $t^2 \partial_t + nt u^a\partial_{u^a}$.
 
 The associated graded $\tgr(\fg) = \bop_{i \in \bbZ} \tgr_i(\fg)$,
 defined by $\tgr_i(\fg) := \fg^i / \fg^{i+1}$, is a graded Lie
 algebra with $\fm := \tgr_-(\fg)$ a NGLA.  The symbol algebra (\S
 \ref{S:geo-str}) of $(\cE,D)$ associated to {\em any} ODE \eqref{E:ODE}
 is isomorphic to $\fm$.  On $\tgr(\fg)$, the induced $\fp$-action has
 $\fg^1 \subset \fp$ acting trivially, so $\tgr_0(\fg) = \fg^0 /
 \fg^1$ acts on $\tgr(\fg)$ by grading-preserving derivations.
 
 It is convenient to introduce a grading directly on $\fg$, but since
 this is not $\fp$-invariant, it should only be regarded as an
 auxilliary structure.  Consider $\sfZ = -\frac{\sfH}{2} - ( 1 +
 \frac{n}{2} ) \id_m$.  The eigenvalues of $\ad_\sfZ$ introduce a Lie
 algebra grading $\fg = \fg_{-n-1} \op ... \op \fg_0 \op \fg_1$, so
 each $\fg_i$ is a $\fg_0$-module.  This satisfies $\fg^i = \bop_{j
   \geq i} \fg_j$ so that $\tgr_i(\fg) \cong \fg_i$.  As vector
 spaces,
 \begin{align}
 \begin{split}
 \fg_1 &\cong \bbR \sfY  \label{E:grg}\\
 \fg_0 &\cong \bbR \sfH \op \fgl_m\\
 \fg_{-1} &\cong \bbR \sfX \op (\bbR \sfv^n \otimes W) \\
 \fg_i &\cong \bbR \sfv^{n+1+i} \otimes W, \quad\quad i=-2, ..., -n-1.
 \end{split}
 \end{align}
 (We caution that $\sfX \in \fg_{-1}$ has usual $\fsl_2$-weight $+2$.)
 
 To pass to the group level, consider the natural action of $\tGL_2
 \times \tGL_m$ on $V_n \otimes W$ with kernel $T = \{ \lambda\,
 \id_2 \times \lambda^{-n}\, \id_m : \lambda \in \bbR^\times \}$, and
 $\LT_2 \subset \tGL_2$ (resp. $\LT^+_2$) the lower triangular
 (resp. strictly lower triangular) matrices.  Define
 \begin{align}
 \begin{split} \label{E:GP}
 G &= (\tGL_2 \times \tGL_m) / T \ltimes (V_n \otimes W),\\
 P &= (\LT_2 \times \tGL_m) / T,\\
 P_+ &= \LT^+_2 / T.
 \end{split}
 \end{align}
 Then $P_+ \subset P \subset G$ are closed subgroups in $G$
 corresponding to $\fg^1 \subset \fg^0 \subset \fg$, with $P_+$ normal
 in $P$. The adjoint action of $G$ restricts to a
 filtration-preserving $P$-action on $\fg$, and $P_+$ consists exactly
 of those elements for which the induced action on the associated graded
 $\tgr(\fg)$ is trivial. Thus we obtain a natural induced
 action of $G_0:=P/P_+$ on $\tgr(\fg)$. It is a familiar fact about
 parabolic subgroups that the quotient projection $P\to G_0$
 splits. Indeed, $G_0$ can be identified with the subgroup of those
 elements of $P$ whose adjoint action preserves the grading on $\fg$,
 and $(g,X)\mapsto g\exp(X)$ defines a diffeomorphism
 $G_0\times\fg_1\to P$. In this picture, $G_0 \subset P$ is the direct
 product of diagonal $2\times 2$ matrices and $\tGL_m$ (modulo
 $T$). This Lie group $G_0$ is isomorphic to that used in \S
 \ref{S:geo-str}, with $\bbR^\times$-factor there corresponding to
 elements $\diag(\lambda,1) \times \id_m$ (modulo $T$).
 The Lie algebra of $G_0$ is $\fg_0$. Collecting the results of this
 section, we in particular easily get:
 \begin{prop}\label{P:adm-pair}
   For the Lie algebra $\fg$ and the group $P$ defined above,
   $(\fg,P)$ is an admissible pair in the sense of Definition 2.5 of
   \cite{Cap2016}. Moreover, the group $P$ is of split exponential
   type in the sense of Definition 4.11 of that reference. 
 \end{prop}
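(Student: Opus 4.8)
The plan is to check the two conditions of \cite{Cap2016} directly against the explicit data assembled in this section; since essentially all the structural facts have already been exhibited, the argument reduces to matching them to the precise requirements of Definitions 2.5 and 4.11. For the admissible pair condition I would recall that it requires a filtered Lie algebra $\fg$ (so that $[\fg^i,\fg^j]\subset\fg^{i+j}$), a Lie group $P$ with Lie algebra $\fg^0 = \fp$ acting on $\fg$ by filtration-preserving automorphisms extending $\Ad$ on $\fp$ and with derivative the adjoint action, together with the nondegeneracy requirements specified there (in our situation including that $\fg_{-1}$ generate $\fm = \tgr_-(\fg)$). The filtered-Lie-algebra property is immediate from the genuine grading produced by the eigenvalues of $\ad_\sfZ$: from $[\fg_i,\fg_j]\subset\fg_{i+j}$ and $\fg^i = \bop_{j\ge i}\fg_j$ one gets $[\fg^i,\fg^j]\subset\fg^{i+j}$. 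The group $P$ of \eqref{E:GP} has Lie algebra $\fg^0$, its action on $\fg$ is the restriction of $\Ad_G$ (already noted to be filtration-preserving, with derivative $\ad$), and the bracket-generating property of $\fm$ was recorded when identifying the symbol algebra, so I would simply cite it.

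For split exponential type I would recall that one needs the kernel $P_+\subset P$ of the induced action on $\tgr(\fg)$ to be the image of $\fg^1$ under an exponential diffeomorphism, together with a splitting $P\cong G_0\ltimes P_+$. All three pieces are in place: $P_+ = \LT^+_2/T$ was identified precisely as the subgroup acting trivially on $\tgr(\fg)$; the complementary subgroup $G_0$ of diagonal $2\times 2$ matrices times $\tGL_m$ (modulo $T$) sits inside $P$; and $(g,X)\mapsto g\exp(X)$ was shown to be a diffeomorphism $G_0\times\fg_1\to P$. Since $\fg^1 = \fg_1 = \bbR\sfY$ is one-dimensional with $\sfY$ nilpotent, $\exp\colon\fg_1\to P_+$ is a polynomial bijection onto the unipotent group $P_+$, hence a diffeomorphism, supplying the last ingredient.

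The verification is essentially bookkeeping, and I do not expect a genuine obstacle — consistent with the ``easily get'' in the statement. The one place meriting care is ensuring that our normalization conventions (which filtration degrees house $\fp$ and $\fg^1$, the specific choice of grading element $\sfZ$, and the exact form of the semidirect splitting) match the wording of Definitions 2.5 and 4.11 on the nose, rather than merely up to a reindexing of the filtration.
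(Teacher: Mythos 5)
Your proposal is correct and follows essentially the same route as the paper, which offers no separate argument beyond the phrase ``Collecting the results of this section, we in particular easily get'' — i.e.\ the proof consists precisely of the facts you cite: the grading by $\ad_\sfZ$ inducing the filtration, the filtration-preserving restriction of $\Ad_G$ to $P$, the bracket-generating property of $\fm$, the identification of $P_+$ as the kernel of the action on $\tgr(\fg)$, and the diffeomorphism $(g,X)\mapsto g\exp(X)$ from $G_0\times\fg_1$ to $P$. Your closing caveat about matching the indexing conventions of Definitions 2.5 and 4.11 is the right thing to flag, but it poses no actual obstacle here.
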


 \subsection{Canonical Cartan connections}\label{S:Cartan}

 The equivalent description of (systems of) ODE as filtered
 $G_0$--structures that we have derived so far in particular includes
 a principal $G_0$--bundle $\cG_0\to\cE$. A particularly nice way to
 obtain invariants in such a situation is to construct a canonical
 Cartan geometry out of the filtered $G_0$--structure. In the language
 of \cite{Cap2016}, we are looking for a Cartan geometry of type
 $(\fg,P)$ (where $\fg$ and $P$ are as in \S \ref{S:triv-eq}
 above), which makes sense on smooth manifolds $M$ of dimension
 $\dim(\fg/\fp)$. Such a Cartan geometry then consists of a (right)
 principal $P$--bundle $\cG \to M$ and a \textit{Cartan connection}
 $\omega \in \Omega^1(\cG,\fg)$. This
 means that $\omega$ satisfies
 \begin{enumerate}
 \item For any $u \in \cG$, $\omega_u : T_u \cG \to \fg$ is a linear
   isomorphism;
 \item $\omega$ is $P$-equivariant, i.e.\ $R_g^* \omega = \Ad_{g^{-1}}
   \circ \omega$ for any $g \in P$;
 \item $\omega$ reproduces the generators of the fundamental vector fields
   $\zeta_A$, i.e.\ we have $\omega(\zeta_A) = A$ for any $A \in \fp$.
 \end{enumerate}
 The fundamental invariant available in this setting then is the
 curvature $K \in \Omega^2(\cG,\fg)$ of $\omega$, which is defined by
 $K(\xi,\eta) = d\omega(\xi,\eta) + [\omega(\xi),\omega(\eta)]$. The
 two--form $K$ is $P$-equivariant and horizontal, and can be
 equivalently encoded as the \textit{curvature function} $\kappa : \cG
 \to \bigwedge^2 \fg^* \otimes \fg$, defined by $\kappa(A,B) =
 K(\omega^{-1}(A), \omega^{-1}(B))$ for $A,B \in \fg$. The Cartan
 connection $\omega$ is {\em regular} if $\kappa(\fg^i, \fg^j) \subset
 \fg^{i+j+1}$ for all $i,j$.
 
 As detailed in Theorem 2.9 of \cite{Cap2016}, any regular Cartan
 geometry of type $(\fg,P)$ on a smooth manifold $M$ gives rise to an
 underlying filtered $G_0$--structure. The filtration $\{ T^i M \}$
 on $TM$ is obtained by projecting down the subbundles
 $T^i\cG:=\omega^{-1}(\fg^i)\subset T\cG$. Regularity of $\omega$
 implies that the symbol algebra $\tgr(TM)$ is everywhere
 NGLA-isomorphic to $\tgr_-(\fg)$. The reduction of structure group is
 then defined by the $G_0$-bundle $\cG_0 := \cG / P_+$. 

 Constructing a canonical Cartan connection means reversing this
 process. Given a filtered $G_0$--structure on $M$, one tries to
 extend the principal $G_0$--bundle $\cG_0\to M$ to a principal
 $P$--bundle $\cG\to M$, and endow that bundle with a natural Cartan
 connection. Such a construction was first obtained in \cite{DKM1999} for (systems of) ODE
 based on the general theory developed in \cite{Mor1993}. Here we
 follow the recent general construction in \cite{Cap2016}, which
 provides a more explicit characterization of the canonical Cartan
 connection via its curvature and stronger uniqueness results. 

 In view of Proposition \ref{P:adm-pair}, two more ingredients are
 needed to apply the general results of \cite{Cap2016}. On the one
 hand, we have to verify that the associated graded $\tgr(\fg)$ from
 \S \ref{S:triv-eq} is the full prolongation of its non--positive
 part (see Definition 2.10 of \cite{Cap2016}). On the other hand, we
 have to construct an appropriate normalization condition to be
 imposed on the curvature of the canonical Cartan connection. Both
 these steps are purely algebraic and we will carry them out in
 \S \ref{S:cd-norm} below. Using the results of Propositions
 \ref{P:codiff} and \ref{P:Tanaka} from there, we can apply Theorem
 4.12 of \cite{Cap2016} to obtain the following result.
 
 \begin{theorem} \label{T:G0str-Cartan} Fix $\fg$ and $P$ as in
   \S \ref{S:triv-eq}.  Then there is an equivalence of
   categories between filtered $G_0$--structures and regular, normal
   Cartan geometries of type $(\fg,P)$.
 \end{theorem}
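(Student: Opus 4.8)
The plan is to deduce this as a direct application of the general machinery of \cite{Cap2016}, concretely Theorem 4.12 there, which asserts precisely such an equivalence of categories once a short list of algebraic hypotheses on the pair $(\fg,P)$ has been verified. My task therefore reduces to checking those hypotheses and assembling them. Theorem 4.12 of \cite{Cap2016} requires: (a) that $(\fg,P)$ be an admissible pair with $P$ of split exponential type; (b) that the associated graded Lie algebra $\tgr(\fg)$ coincide with the full (Tanaka) prolongation of its non--positive part $\tgr_{\leq 0}(\fg)$; and (c) a choice of normalization condition on the curvature, i.e.\ a $P$--equivariant splitting (furnished by a codifferential) that singles out the \emph{normal} Cartan connections among all regular ones.

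For hypothesis (a), Proposition \ref{P:adm-pair} already records that $(\fg,P)$ is admissible and that $P$ is of split exponential type, so nothing further is needed there. Hypotheses (b) and (c) are exactly the content of the two algebraic results to be proven in \S\ref{S:cd-norm}: Proposition \ref{P:Tanaka} establishes the prolongation rigidity $\tgr(\fg) = \mathrm{Prol}(\tgr_{\leq 0}(\fg))$, while Proposition \ref{P:codiff} constructs the required codifferential furnishing the normalization condition. With these three inputs in place, Theorem 4.12 of \cite{Cap2016} produces the asserted equivalence: every filtered $G_0$--structure extends, uniquely up to isomorphism, to a regular normal Cartan geometry of type $(\fg,P)$, and morphisms correspond bijectively; conversely, Theorem 2.9 of \cite{Cap2016} (recalled above) recovers the underlying filtered $G_0$--structure of any regular Cartan geometry, so that the two constructions are mutually inverse at the level of categories.

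The real work — and hence the main obstacle — lies not in this deduction but in securing (b) and (c), which is why they are deferred to \S\ref{S:cd-norm}. Verifying (b) amounts to computing the Tanaka prolongation of the NGLA $\fm = \tgr_-(\fg)$ together with its degree--$0$ part $\fg_0 = \bbR\sfH \oplus \fgl_m$, and checking that the prolongation terminates at degree $1$, reproducing exactly $\fg_1 = \bbR\sfY$ with no higher terms; this is a finite but delicate computation driven by the $\fsl_2 \times \fgl_m$--module structure of the graded pieces recorded in \eqref{E:grg}. Hypothesis (c) requires exhibiting a $G_0$--equivariant contraction on $\bigwedge^2 \fm^* \otimes \fg$ (which then extends to a $P$--equivariant codifferential using the split-exponential structure) whose kernel is a well-adapted complement to the image of the Lie-algebra differential. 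Here the absence of a Kostant-type Laplacian, available only in the parabolic setting, means one cannot simply invoke the standard harmonic theory and must instead construct the codifferential by hand; this is the step where the specific representation theory of the ODE symbol algebra genuinely enters.
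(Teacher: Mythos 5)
Your proposal is correct and follows essentially the same route as the paper: the theorem is deduced from Theorem 4.12 of \cite{Cap2016} by combining Proposition \ref{P:adm-pair} (admissibility and split exponential type) with the two algebraic inputs of \S\ref{S:cd-norm}, namely Proposition \ref{P:Tanaka} (full prolongation) and Proposition \ref{P:codiff} (the codifferential as normalization condition), with Theorem 2.9 of \cite{Cap2016} supplying the inverse construction. Your only deviation is a cosmetic one in describing how the codifferential is obtained (the paper defines it globally as the adjoint of $\partial_{\fg}$ on $C^k(\fg,\fg)$ for an explicit inner product, rather than building it on $\bigwedge^2\fm^*\otimes\fg$ and extending), which does not affect the deduction of the theorem itself.
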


 \begin{remark}\label{R:strong-reg}
   As described in \S \ref{S:geo-str}, ODE (considered up to
   contact transformations) define filtered $G_0$--structures, but not
   every filtered $G_0$--structure is of that form. This can be easily
   seen from the curvature of the canonical Cartan connection. We claim
   that for structures induced by ODE, we get a stronger version of
   regularity. Indeed, in this case $\kappa(\fg^i,\fg^j) \subset
   \fg^{\min(i,j)-1}$ for all $i,j<0$ and this is a proper subspace of
   $\fg^{ i+j+1}$ if $i,j<-1$.
   
   By definition of the curvature, we get
   \begin{align} \label{E:kappa}
   \kappa(\omega(\xi),\omega(\eta)) = \xi \cdot \omega(\eta) - \eta \cdot \omega(\xi) - \omega([\xi,\eta]) + [\omega(\xi),\omega(\eta)],
   \end{align}
   and if $\omega(\xi)$ has values in $\fg^i$ and $\omega(\eta)$ has values in $\fg^j$, then the first two summands on the right hand side have values in $\fg^{\min(i,j)-1}$.  Next, because of the large abelian
   ideal $\fa$, the Lie bracket on $\fg$ has the property that
   $[\fg^i,\fg^j]\subset\fg^{\min(i,j)-1}$ for all $i,j<0$, which handles the last term on the right hand side.  Hence, it remains to show that for structures coming from ODE, we also have $\omega([\xi,\eta])$ taking values in $\fg^{\min(i,j)-1}$.

   For such structures, we have the
   decomposition $T^i\cE=E\oplus F^i$ for all $i<0$ with $E\subset
   T^{-1}\cE$ and $F^i$ involutive. Given a vector field $\xi$ on
   $\cG$ such that $\omega(\xi)$ has values in $\fg^i$ for $i<0$, we
   can correspondingly decompose $\xi_1+\xi_2$, where $\xi_1$ is a
   lift of a section of $E\to\cE$ and $\xi_2$ lifts a section of
   $F^i\to\cE$. Similarly decompose $\eta=\eta_1+\eta_2$ for
   $\eta\in\fX(\cG)$ such that $\omega(\eta)$ has values in
   $\fg^j$. Using that a Lie bracket of lifts is a lift of  the Lie
   bracket of the underlying fields, one easily verifies that all the
   brackets $[\xi_i,\eta_j]$ are lifts of sections of
   $T^{\min(i,j)-1}\cE$ (or of smaller filtration components). Thus
   $\omega([\xi,\eta])$ has values in $\fg^{\min(i,j)-1}$, 
   which completes the argument. 

   All the further developments in this article make sense for
   arbitrary filtered $G_0$--structures and not only for the ones
   coming from ODE provided that one uses the description of Wilczynski
   invariants in Theorem \ref{T:Wilc} as a definition in the more
   general setting.
 \end{remark}
 
 \subsection{The space of solutions and C-class}
 \label{S:soln-space-geo-str}
 
 In the description of \S \ref{S:geo-str}, it is clear how to
 obtain the space of all solutions of \eqref{E:ODE}. The solutions are
 the integral curves of the line bundle $E\subset T\cE$ spanned by
 $\frac{d}{dt}$. Hence locally the space of solutions is the space of
 leaves of the foliation defined by $E$. In the case of the trivial
 equation $\bu^{(n+1)} = 0$, we obtain the solutions $\bu =
 \sum_{i=0}^n \ba_i t^i$, where $\ba_i \in W = \bbR^m$ are
 constant. Hence we obtain a global space $\cS$ of solutions in this
 case and viewing $\cE$ as $G/P$, we see that $\cS=G/Q$, where $Q =
 (\tGL_2 \times \tGL_m) / T \subset G$. This means that $\cS$ is the
 homogeneous model for Cartan geometries of type $(G,Q)$. In
 particular, the tangent bundle of $\cS$ is the homogeneous vector
 bundle $G\times_Q(\fg/\fq)$, and as a $Q$--module, we get
 $\fg/\fq\cong \fa = V_n\otimes W$. 

 This tensor decomposition of $\fg/\fq$ gives rise to a geometric structure on
 $\cS$ that can be described by the corresponding decomposition of
 the tangent bundle $T\cS$ into a tensor product. A simpler
 description is provided by the distinguished variety in $\bbP(\fa)$
 given as
 \begin{align} \label{E:Segre}
  \bbP^1\times \bbP^{m-1} \to \bbP(\fa), \quad 
  ([b_0:b_1],[w]) \mapsto [(b_0\sfx+b_1\sfy)^n \otimes w].
  \end{align} 
  Translating by $G$, one obtains a canonical isomorphic copy of this
  variety in each tangent space of $\cS$. The resulting geometric
  structure is called a {\em Segr\'e structure} (modelled on
  \eqref{E:Segre}). When $m=1$, these structures are commonly called
  {\em $\tGL_2$-structures}, but we will use the term Segr\'e
  structure for all cases. Notice that this is a standard first order
  structure corresponding to $Q\subset \tGL(\fa)$, without any additional
  filtration on the tangent bundle. 

  Now one may ask the question whether similar things happen for more general ODE, both on the level of Cartan geometries and on
  the level of Segr\'e structures.  On the latter level, this is studied intensively in the literature in many special cases, see e.g. \cite{DT2006,Nur2009,GN2010,Kry2010,DG2012}.  For our purposes, the results of \cite{Dou2008} are particularly relevant. In that article, it is shown in general that vanishing of the generalized Wilczynski invariants from Definition \ref{D:Wilc}
  implies existence of a natural Segr\'e structure on the space of
  solutions. The pullback of $T\cS$ to $\cE$ is naturally isomorphic to 
  $T\cE/E$. This is modelled on $\fa$, so on that level a
  decomposition as a tensor product is available. The Wilczynski
  invariants can be interpreted as obstructions to this decomposition
  descending to a decomposition of $T\cS$, which is crucial for the
  developments in \cite{Dou2008}, compare also to the proof of Theorem \ref{T:Wilc}.

  On the level of Cartan geometries the question of descending is
  closely related to the concept of C-class. The technical aspects of
  this descending process are worked out in the case of parabolic
  geometries in \cite{Cap2005}. As shown in \S 1.5.13 and 1.5.14
  of \cite{CS2009}, the proofs in that article apply to general
  groups. Consider an equation $\cE$ and a (local) space of solutions
  $\cS$, i.e.~a local leaf space for $E\subset T\cE$. Descending of
  the Cartan geometry $(\cG\to\cE,\omega)$ first requires that the
  principal right action of $P$ on $\cG$ extends to a smooth action of
  $Q\supset P$ which has the fields $\omega^{-1}(A)\in\fX(\cG)$ for $A \in \fq$ as
  fundamental vector fields. If such an extension exists, then,
  possibly shrinking $\cS$, one obtains a projection $\cG\to\cS$,
  which is a $Q$--principal bundle. Next, one has to ask whether (the
  restriction of) $\omega$ can be interpreted as a Cartan connection
  on that principal $Q$--bundle, which boils down to the question of
  $Q$--equivariance. Surprisingly, it turns out that the whole question
  of descending of the Cartan geometry is equivalent to the fact that
  all values of the curvature function $\kappa$ of $\omega$ vanish
  upon insertion of any element of $\fq/\fp\subset\fg/\fp$, see
  Theorem 1.5.14 of \cite{CS2009}. 

  But now the fact that the canonical Cartan geometry on $\cE$
  descends to the space $\cS$ implies that the Cartan curvature and
  hence all invariants derived from it in an equivariant fashion
  descend to $\cS$ and thus are first integrals. This is the technical
  definition of C-class that we use in this article: 

  \begin{defn} \label{D:C-class} An ODE \eqref{E:ODE} is of {\em
      C-class} if its corresponding regular, normal Cartan geometry
    $(\cG \to \cE, \omega)$ descends to sufficiently small spaces of
    solutions or, equivalently, if its curvature function satisfies
    $i_\sfX \kappa = 0$, where $\sfX \in \fg_{-1}$ was defined in \S \ref{S:triv-eq}.
 \end{defn}
 
 \section{Codifferentials and normalization conditions}
 \label{S:cd-norm}

 \subsection{Filtrations and gradings}\label{S:filtgr}
 We will use the general results from \cite{Cap2016} to obtain
 canonical Cartan connections. In addition to the properties of the
 pair $(\fg,P)$ that we have already verified, the main ingredient needed
 to apply this method is a choice of normalization condition. We do
 this via a codifferential in the sense of Definition 3.9 of
 \cite{Cap2016}.

 Such a codifferential consists of $P$--equivariant maps acting between
 spaces of the form $L(\bigwedge^k(\fg/\fp),\fg)$ of alternating multilinear
 maps. An important role in \cite{Cap2016} is played by the natural
 $P$--invariant filtration on these spaces and the associated graded
 spaces. For our purposes, it will be useful to view these as subspaces
 of the chain spaces $C^k(\fg,\fg)=L(\bigwedge^k\fg,\fg)$. Hence we
 will first collect the necessary information on filtrations and
 associated graded spaces in this setting. Observe that each of the
 spaces $C^k(\fg,\fg)$ naturally is a representation of $\fg$ and of
 $P$, and we can identify $L(\bigwedge^k(\fg/\fp),\fg)$ with the
 subspace
$$
C^k_{\text{hor}}(\fg,\fg)=\{ \phi \in C^k(\fg,\fg) : i_z \phi =
 0, \forall z \in \fp \}
$$
of horizontal $k$--chains, which is immediately seen to be
$P$--invariant.

As we have seen in \S \ref{S:triv-eq}, the Lie algebra $\fg$
carries a $P$--invariant filtration $\{\fg^i\}_{i=-n-1}^1$ such that
$\fp = \fg^0$. Moreover, we noticed that this filtration is actually
induced by a grading $\fg=\fg_{-n-1}\oplus\dots\oplus\fg_1$ of $\fg$ in
the sense that $\fg^i=\oplus_{j\geq i}\fg_j$. The grading is not
$P$--invariant, however, so it has to be viewed as an auxilliary
object. In particular, this implies that one can identify the filtered
Lie algebra $\fg$ with its associated graded Lie algebra
$\tgr(\fg)$. The filtration and the grading on $\fg$ induce a
filtration and a grading on each of the chain spaces $C^k(\fg,\fg)$,
which can be conveniently described in terms of homogeneity. Moreover,
it follows readily that each of the spaces $C^k(\fg,\fg)$ can be
naturally identified with its associated graded.

The notion of homogeneity is more familiar in the setting of
gradings: We say that $\varphi \in C^k(\fg,\fg)$ is {\em homogeneous of
degree $\ell$} if, for all $i_1,\dots,i_k\in\{-n-1,\dots,1\}$, it maps
$\fg_{i_1}\times\dots\times\fg_{i_k}$ to $\fg_{i_1+\dots+i_k+\ell}$. In our
simple situation, {\em homogeneity of degree $\geq\ell$ (in the filtration
sense)} then simply means that $\fg_{i_1}\times\dots\times \fg_{i_k}$ is always
mapped to $\fg^{i_1+\dots+i_k+\ell}$. For the passage to the
associated graded, it suffices to consider spaces of the form
$L(\bigwedge^k(\fg/\fp),\fg)$. As proved in Lemma 3.1 of
\cite{Cap2016}, identifying $\fg$ with $\tgr(\fg)$, the associated
graded to this filtered space can be identified with $C^k(\fg_-,\fg)$
(with its natural grading). For a map $\varphi\in
L(\bigwedge^k(\fg/\fp),\fg)$ which is homogeneous of degree
$\geq\ell$, the projection $\tgr_\ell(\varphi)\in C^k(\fg_-,\fg)_\ell$
is obtained by applying $\varphi$ to (the classes of) elements of
$\fg_-$ and taking the homogeneous component of degree $\ell$. Here we
denote by $C^k(\fg_-,\fg)_\ell$ the homogeneity $\ell$ component of
$C^k(\fg_-,\fg)$. 

The spaces $C^k(\fg_-,\fg)$ are the chain spaces in the standard
complex computing the Lie algebra cohomology $H^*(\fg_-, \fg)$ of
the Lie algebra $\fg_-$ with coefficients in the module
$\fg$. Correspondingly, there is a standard differential in this
complex, which we denote by $\partial_{\fg_-}$. This differential
plays an important role in the definitions of normalization conditions
and of codifferentials.

\subsection{Scalar product and codifferential}\label{S:codiff}
 As in \S \ref{S:filtgr}, we identify $L(\bigwedge^k(\fg/\fp),\fg)$ with $C^k_{\text{hor}}(\fg,\fg)$.  Define an inner product $\langle\ , \ \rangle$ on $\fg$ by declaring
 $\sfX$, $\sfH$, $\sfY$, $\sfv^i_b:=\sfv^i\otimes\sfe_b$, $\sfe^a_b$
 to be an orthogonal basis with
 \[
 \langle \sfX, \sfX \rangle = \langle \sfY, \sfY \rangle = 1, \quad
 \langle \sfH, \sfH \rangle = 2, \quad
 \langle \sfe^a_b, \sfe^a_b \rangle = 1, \quad 
 \langle \sfv^i_b, \sfv^i_b \rangle = \frac{(n-i)!}{i!}.
 \]
 Then $\forall A,B \in \fq$ and $\forall u,v \in \fa$, this satisfies:
 \begin{align} \label{E:innprod}
 \langle A, B \rangle = \tr(A^\top B), \qquad
 \langle Au, v \rangle = \langle u, A^\top v \rangle.
 \end{align}
 Extend $\langle \ , \ \rangle$ to an inner product on $C^*(\fg,\fg)$.
 The spaces $C^k(\fg,\fg)$ are the chain spaces in the standard
 complex computing the Lie algebra cohomology $H^*(\fg,\fg)$, and we
 denote by $\partial_{\fg}$ the standard differentials in that
 complex. From the explicit formula for these differentials (which
 only uses the Lie bracket in $\fg$), it follows readily that these
 maps are $\fg$--equivariant and $Q$--equivariant.

 \begin{defn}\label{D:codiff}
   For each $k$, we define the \textit{codifferential}
   $\partial^*:C^k(\fg,\fg)\to C^{k-1}(\fg,\fg)$ as the adjoint (with
   respect to the inner products we have just defined) of the Lie
   algebra cohomology differential $\partial_{\fg}$. Explicitly, we
   have the relation $\langle \partial_{\fg} \phi, \psi \rangle = \langle
   \phi, \partial^* \psi \rangle$ for all $\phi\in C^{k-1}(\fg,\fg)$
   and $\psi\in C^k(\fg,\fg)$.
 \end{defn}
 
 \begin{lemma}\label{L:codiff} 
   The codifferential restricts to a $P$--equivariant map
   $\partial^*:L(\bigwedge^k(\fg/\fp),\fg)\to
   L(\bigwedge^{k-1}(\fg/\fp),\fg)$. This map preserves homogeneity
   and thus is compatible with the filtrations on both
   spaces. Moreover, it is image--homogeneous in the sense of
   Definition 3.7 of \cite{Cap2016}.
 \end{lemma}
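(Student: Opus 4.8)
The plan is to verify the four assertions in turn—restriction to the horizontal subspaces (equivalently, to the spaces $L(\bigwedge^k(\fg/\fp),\fg)\cong C^k_{\hor}(\fg,\fg)$), $P$--equivariance, preservation of homogeneity, and image--homogeneity—using three ingredients already in hand: the relations \eqref{E:innprod} governing the inner product, the $\fg$-- and $Q$--equivariance of $\partial_{\fg}$, and the fact that the grading of $\fg$ is concentrated so that $\fp=\fg_0\oplus\fg_1$ is exactly $\fg_{\ge 0}$, with $\fg_-$ its orthogonal complement (the inner product being graded). Throughout I fix an orthonormal basis $\{e_\alpha\}$ of $\fg$ adapted to the grading, and write $\veps_X$ for the operator wedging $X^\flat=\langle X,\cdot\rangle$ into the form part and $i_X$ for insertion, so that $\veps_X=i_X^{\,*}$ and $\{i_X,\veps_Y\}=\langle X,Y\rangle\,\id$.

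The homogeneity statements are the routine part. The differential $\partial_{\fg}$ is built only from the bracket and the adjoint action, both grading--preserving since $[\fg_i,\fg_j]\subseteq\fg_{i+j}$, so $\partial_{\fg}$ is homogeneous of degree $0$. As the basis is grading--adapted, distinct homogeneous components of $C^k(\fg,\fg)$ are mutually orthogonal; hence the adjoint $\partial^*$ is again homogeneous of degree $0$. This preserves homogeneity exactly, gives compatibility with the filtrations, and—being a \emph{strictly} degree--zero map—immediately yields image--homogeneity in the sense of Definition 3.7 of \cite{Cap2016}.

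For $P$--equivariance the key point is that, although the inner product is not $P$--invariant, the relations \eqref{E:innprod} say exactly that $\ad_A^{\,*}=\ad_{A^\top}$ for all $A\in\fq$, and these integrate to the matrix identity $\Ad_g^{\,*}=\Ad_{g^\top}$ on $\fg$ for every $g\in Q$ (transpose descends to $Q=(\tGL_2\times\tGL_m)/T$ since $T$ is central, hence transpose--invariant). Building the $Q$--action $\rho(g)$ on $C^k(\fg,\fg)=\bigwedge^k\fg^*\otimes\fg$ out of $\Ad_g$ and taking adjoints factor by factor then gives $\rho(g)^*=\rho(g^\top)$. Applying $*$ to the equivariance identity $\rho(g)\,\partial_{\fg}=\partial_{\fg}\,\rho(g)$ yields $\rho(g^\top)\,\partial^*=\partial^*\,\rho(g^\top)$; since $g^\top$ ranges over all of $Q$ as $g$ does, $\partial^*$ is $Q$--equivariant, in particular $P$--equivariant.

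The technical heart, which I expect to be the main obstacle, is horizontality. Since $C^k_{\hor}=\bigcap_{z\in\fp}\ker i_z$ has orthogonal complement $\fp^\flat\wedge C^{k-1}$, for $\psi\in C^k_{\hor}$ one computes $\langle i_z\partial^*\psi,\eta\rangle=\langle\psi,\partial_{\fg}(z^\flat\wedge\eta)\rangle$, so it suffices to prove the purely algebraic claim that $\partial_{\fg}(z^\flat\wedge\eta)\in\fp^\flat\wedge C^{k-1}$ for all $z\in\fp$. I would derive this from the graded--commutator identity
\[
\{\partial_{\fg},\veps_z\}=-\tfrac12\sum_\alpha\veps_{e_\alpha}\,\veps_{\ad_{e_\alpha}^{\,*}z},
\]
which follows from the Chevalley--Eilenberg formula for $\partial_{\fg}$ together with $\{i_X,\veps_Y\}=\langle X,Y\rangle\id$ and the anticommutativity of wedges. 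Writing $\partial_{\fg}\veps_z=\{\partial_{\fg},\veps_z\}-\veps_z\partial_{\fg}$, the second term visibly lies in $\fp^\flat\wedge C^{k-1}$; for each summand of the first term, either $e_\alpha\in\fp$ (so $\veps_{e_\alpha}$ already wedges a covector in $\fp^\flat$), or $e_\alpha\in\fg_-\subseteq\fg_{i}$ with $i\le-1$, in which case $\ad_{e_\alpha}^{\,*}$ raises the grading degree by $-i\ge 1$ and hence sends $\fp=\fg_0\oplus\fg_1$ into $\fg_{\ge 1}\subseteq\fp$ (or to $0$), so that $\veps_{\ad_{e_\alpha}^{\,*}z}$ wedges a covector in $\fp^\flat$. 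Either way the summand lies in $\fp^\flat\wedge C^{k-1}$, giving $i_z\partial^*\psi=0$ for all $z\in\fp$ and completing the proof. The delicate point is precisely this degree bookkeeping, which relies on the grading of $\fg$ terminating at $\fg_1$.
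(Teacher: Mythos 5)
Your proof is correct, and for the equivariance, homogeneity, and image--homogeneity claims it runs essentially parallel to the paper's (the paper proves $\fq$--equivariance via the adjoint relation and \eqref{E:innprod} and deduces homogeneity from commutation with the grading element $\sfZ\in\fz(\fg_0)\subset\fq$; your graded--inner--product argument and your group--level version via $\Ad_g^{\,*}=\Ad_{g^\top}$ are equivalent, and arguably more careful about passing from $\fq$ to the possibly disconnected $Q$). Where you genuinely diverge is on horizontality, which you correctly identify as the technical heart. The paper's route is shorter and more conceptual: it observes that the orthocomplement of $C^k_{\hor}(\fg,\fg)=\bigwedge^k\fann(\fp)\otimes\fg$ inside $C^k(\fg,\fg)$ is exactly the space of cochains vanishing on $\bigwedge^k\fg_-$, and that this subspace is preserved by $\partial_{\fg}$ simply because $\fg_-$ is a subalgebra of $\fg$ (every term of the Chevalley--Eilenberg formula evaluated on $\fg_-$ only ever feeds elements of $\fg_-$ back into the cochain); adjointness then finishes the argument in one line. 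Your anticommutator identity $\{\partial_{\fg},\veps_z\}$ plus the degree bookkeeping showing $\ad_{e_\alpha}^{\,*}$ maps $\fp$ into $\fp$ for $e_\alpha\in\fg_-$ is the precise adjoint of that same observation: $\langle\ad_{e_\alpha}^{\,*}z,v\rangle=\langle z,[e_\alpha,v]\rangle$ vanishes for all $z\in\fp$, $v\in\fg_-$ exactly because $[\fg_-,\fg_-]\subset\fg_-$ and $\fp=\fg_-^\perp$. So the two arguments are dual to one another; yours costs an explicit operator computation but makes the mechanism visible term by term, while the paper's buys brevity by working on the orthocomplement instead of on $\partial^*$ directly. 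One small correction: the bookkeeping does not actually rely on the grading terminating at $\fg_1$, only on $\fp$ being the non--negative part of the grading with $\fg_-$ its orthocomplement; and note that image--homogeneity, which you dispatch as immediate from strict degree--zero--ness, is spelled out in the paper by the same observation that the degree--$<\ell$ components of a preimage can be discarded since they must lie in $\ker(\partial^*)$.
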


 \begin{proof} We have already noted that $\partial_{\fg}$ is
   $\fg$-equivariant. Now for any $A \in \fq$, we have
 \begin{align*}
  \langle \phi, A \partial^* \psi \rangle &= \langle A^\top \phi,
  \partial^* \psi \rangle = \langle \partial_{\fg}(A^\top \phi), \psi
  \rangle = \langle A^\top \partial_{\fg} \phi, \psi \rangle\\ &= \langle
  \partial_{\fg} \phi, A \psi \rangle = \langle \phi, \partial^*(A \psi)
  \rangle,
 \end{align*}
 so $\partial^*$ is $\fq$-equivariant on the full cochain spaces. Since
 the grading element $\sfZ$ lies in $\fz(\fg_0) \subset \fq$, we see
 that $\partial^*$ commutes with the action of $\sfZ$. This means that
 it preserves homogeneity in the graded--sense and thus also in the
 sense of filtrations.
 
 Let $\op^\perp$ denote orthogonal direct sum.  Then $\fg = \fg_-
 \op^\perp \fp$ induces $\fg^* = \fann(\fg_-) \op^\perp \fann(\fp)$.
 Letting $\bigwedge^{i,j} := \bigwedge^i \fann(\fg_-) \otimes
 \bigwedge^j \fann(\fp)$, we have $\bigwedge^k \fg^* \cong
 \bop_{i+j=k}^\perp \bigwedge^{i,j}$. Now by definition, the subspace
 $L(\bigwedge^k(\fg/\fp),\fg)$ of $C^k(\fg,\fg)$ coincides with
 $\bigwedge^{0,k} \otimes \fg$. Thus, its orthocomplement is given by
 $\bop_{i>0}^\perp \bigwedge^{i,k-i}\otimes\fg$, and this space can be written
 as
$$
\{ \varphi \in C^k(\fg,\fg) : \varphi(v_1,...,v_k) = 0, \,
 \forall v_i \in \fg_- \}.
$$ 
Since $\fg_-$ is a subalgebra of $\fg$, the definition of the
differential implies that $\partial_{\fg}$ maps
$L(\bigwedge^{k-1}(\fg/\fp),\fg)^\perp$ to
$L(\bigwedge^k(\fg/\fp),\fg)^\perp$. Now for $\psi \in
L(\bigwedge^k(\fg/\fp),\fg)$, we can verify that $\partial^*\psi\in
L(\bigwedge^{k-1}(\fg/\fp),\fg)$ by showing that for all $\phi\in
L(\bigwedge^{k-1}(\fg/\fp),\fg)^\perp$, we get
$0=\langle\phi,\partial^*\psi\rangle$. But this follows directly from
the definition as an adjoint. Since $L(\bigwedge^k(\fg/\fp),\fg)$ is a
$P$--invariant subspace of $C^k(\fg,\fg)$ for each $k$,
$Q$--equivariance of $\partial^*$ on $C^k(\fg,\fg)$ readily implies
$P$--equivariance of the restriction.

Image-homogeneity as defined in \cite{Cap2016} requires the
following. If we have an element in the image of $\partial^*$, which
is homogeneous of degree $\geq\ell$ in the filtration sense, then it
should be possible to write it as the image under $\partial^*$ of an
element which itself is homogeneous of degree $\geq\ell$. But in our
case, the filtration is derived from a grading that is preserved by
$\partial^*$ . Thus, if all non--zero homogeneous components of
$\partial^*\psi$ lie in degrees $\geq\ell$, it follows that all
homogeneous components of degree $<\ell$ of $\psi$ must be contained
in the kernel of $\partial^*$. (Otherwise, their images would be of
the same homogeneity.) Hence the homogeneous components of degree
$<\ell$ can be left out without changing the image, and image-homogeneity follows. 
\end{proof}

To prove that $\partial^*$ can be used to obtain a normalization
condition, we have to consider the induced maps between the associated
graded spaces. As in \S \ref{S:filtgr}, we
  view the associated graded of $L(\bigwedge^k(\fg/\fp),\fg)$ as
  $C^k(\fg_-,\fg)$.  Observe further that $C^k(\fg_-,\fg)$ is
exactly the subspace $\bigwedge^{0,k}\otimes\fg\subset C^k(\fg,\fg)$
as introduced in the proof of Lemma \ref{L:codiff}. Having made these
observations we can now verify the remaining properties of the
codifferential needed in order to apply the general theory for
existence of canonical Cartan connections.

\begin{prop}\label{P:codiff}
  The maps $\partial^*$ from Definition \ref{D:codiff}
  define a codifferential in the sense of Definition 3.9 of
  \cite{Cap2016}. Hence, in the terminology of that reference,
  $\ker(\partial^*)\subset L(\bigwedge^2(\fg/\fp),\fg)$ is a
  normalization condition and $\im(\partial^*)\subset\ker(\partial^*)$
  is a maximally negligible submodule.
\end{prop}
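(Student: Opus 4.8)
The plan is to verify, one by one, the defining properties of a codifferential from Definition 3.9 of \cite{Cap2016}, leveraging the structural results already assembled in Lemma \ref{L:codiff}. The core content of a codifferential is a system of $P$--equivariant maps $\partial^*$ on the spaces $L(\bigwedge^k(\fg/\fp),\fg)$ that (a) preserve homogeneity, hence are filtration--compatible; (b) are image--homogeneous; and (c) induce, on the level of associated graded spaces $C^k(\fg_-,\fg)$, a map that is the algebraic adjoint of the Lie algebra cohomology differential $\partial_{\fg_-}$ with respect to a $\tgr_0(\fg)$--invariant inner product. Properties (a) and (b) are exactly the content of Lemma \ref{L:codiff}, so the only genuinely new verification is (c): identifying the graded map $\tgr(\partial^*)$ with the standard codifferential $\partial_{\fg_-}^*$ on the Lie algebra cohomology complex of $\fg_-$.

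First I would recall, following the passage after Lemma \ref{L:codiff}, that the associated graded of $L(\bigwedge^k(\fg/\fp),\fg)$ is identified with $C^k(\fg_-,\fg) = \bigwedge^{0,k}\otimes\fg \subset C^k(\fg,\fg)$. Since $\partial^*$ preserves the grading (it commutes with $\ad_\sfZ$, as shown in Lemma \ref{L:codiff}), the passage to the associated graded is transparent: $\tgr(\partial^*)$ is simply the restriction of $\partial^*$ to $C^k(\fg_-,\fg)$, followed by orthogonal projection back onto $C^{k-1}(\fg_-,\fg)$. Thus I would compute the full $\partial_{\fg}$ on $\fg_-$--arguments and decompose it according to the grading. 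The key observation is that the ambient differential $\partial_{\fg}$ differs from $\partial_{\fg_-}$ only by terms involving the $\fp$--part of brackets and the $\fp$--action on coefficients; taking adjoints and restricting to the graded pieces, these extra terms drop out precisely because they raise or preserve filtration degree in a way orthogonal to $C^{k-1}(\fg_-,\fg)$. Concretely, I expect $\tgr(\partial^*) = \partial_{\fg_-}^*$ once one checks that the inner product restricted to $\fg_-$--arguments and $\fg$--values is the one making the adjoint relation hold against $\partial_{\fg_-}$.

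The second essential ingredient is the compatibility of the inner product with the $\tgr_0(\fg)$--action on $C^k(\fg_-,\fg)$, i.e.\ that the adjoint on the graded level is taken with respect to a $G_0$--invariant (equivalently $\fg_0$--invariant) scalar product, as required for the normalization to be $G_0$--equivariant. This follows from \eqref{E:innprod}: the inner product satisfies $\langle A u, v\rangle = \langle u, A^\top v\rangle$ and $\langle A,B\rangle = \tr(A^\top B)$, which says precisely that the inner product behaves well under the relevant (reductive) Lie algebra action, so its restriction to the graded components is $\tgr_0(\fg)$--invariant. Combined with the identification $\tgr(\partial^*)=\partial_{\fg_-}^*$ from the previous step, this shows the graded map is the genuine Kostant--type codifferential, completing the verification of Definition 3.9. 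The final two assertions—that $\ker(\partial^*)$ is a normalization condition and $\im(\partial^*)\subset\ker(\partial^*)$ is maximally negligible—then follow formally from the general theory of \cite{Cap2016} once $\partial^*$ is known to be a codifferential, using that $\partial^*\circ\partial^* = 0$ (adjoint of $\partial_\fg\circ\partial_\fg=0$) so the image lies in the kernel.

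The main obstacle I anticipate is the identification $\tgr(\partial^*) = \partial_{\fg_-}^*$. The subtlety is that $\partial^*$ is defined as the adjoint of the \emph{full} differential $\partial_{\fg}$ on $C^*(\fg,\fg)$, not of $\partial_{\fg_-}$, and these differ by genuinely nonzero terms coming from brackets landing in $\fp$ and from the action of $\fp$ on the coefficient module $\fg$. One must argue carefully that, after restricting the adjoint to $\fg_-$--arguments and projecting orthogonally, exactly the $\partial_{\fg_-}^*$--part survives. This hinges on the orthogonal decomposition $\fg = \fg_- \op^\perp \fp$ together with the fact (noted in the proof of Lemma \ref{L:codiff}) that $\partial_{\fg}$ maps $L(\bigwedge^{k-1}(\fg/\fp),\fg)^\perp$ into $L(\bigwedge^k(\fg/\fp),\fg)^\perp$ because $\fg_-$ is a subalgebra; dualizing, $\partial^*$ respects the complementary decomposition in the right way, so on the graded level only the intrinsic $\fg_-$--differential contributes. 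Making this bookkeeping precise—tracking which homogeneous components of $\partial_\fg$ are adjoint to which, and confirming no cross terms from the $\fp$--action survive on the associated graded—is where the real care is required.
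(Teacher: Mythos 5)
Your proposal is correct and follows essentially the same route as the paper: after delegating equivariance, filtration-compatibility and image-homogeneity to Lemma \ref{L:codiff}, the remaining verification is exactly the identification of the induced graded map with the adjoint of $\partial_{\fg_-}$, carried out via the decomposition $\partial_{\fg}\psi\in\bigwedge^{1,k}\otimes\fg\oplus\bigwedge^{0,k+1}\otimes\fg$ for horizontal $\psi$, whose $\bigwedge^{0,k+1}$-component is $\partial_{\fg_-}\psi$. The only cosmetic difference is that the paper states the remaining condition of Definition 3.9 as \emph{disjointness} of $\underline{\partial^*}$ and $\partial_{\fg_-}$ and obtains it as a consequence of the adjointness you prove directly.
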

\begin{proof}
  In view of Lemma \ref{L:codiff}, it remains to verify the second
  condition in Definition 3.9 of \cite{Cap2016}. This says that the
  maps $\underline{\partial^*}:C^k(\fg_-,\fg)\to C^{k-1}(\fg_-,\fg)$
  induced by $\partial^*$ are disjoint to $\partial_{\fg_-}$. As
  above, we can identify $C^k(\fg_-,\fg)$ with the subspace
  $\bigwedge^{0,k}\otimes\fg\subset C^k(\fg,\fg)$, which endows it
  with an inner product. Since $\fg_-$ is a subalgebra in $\fg$, it
  easily follows from the definition of the Lie algebra cohomology
  differential that for $\psi\in C^k(\fg_-,\fg)\subset C^k(\fg,\fg)$
  we get $\partial_{\fg}\psi\in \bigwedge^{1,k}\otimes\fg\oplus
  \bigwedge^{0,k+1}\otimes\fg$. Moreover, the component of
  $\partial_{\fg}\psi$ in $\bigwedge^{0,k+1}\otimes\fg$ coincides with
  $\partial_{\fg_-}\psi$. 

  Now taking $\varphi\in C^k(\fg_-,\fg)$ that is homogeneous of some
  fixed degree $\ell$, we get $\underline{\partial^*}\varphi$ by
  interpreting $\partial^*\varphi$ as an element of
  $C^{k-1}(\fg_-,\fg)$. For $\psi\in C^{k-1}(\fg,\fg)$, we thus can
  have $\langle\partial^*\varphi,\psi\rangle\neq 0$ only if $\psi$ is
  homogeneous of the same degree $\ell$ and contained in
  $\bigwedge^{0,k-1}\otimes\fg$. By definition, we get
  $\langle\partial^*\varphi,\psi\rangle=\langle\varphi,
  \partial_{\fg}\psi\rangle$. Since
  $\varphi\in\bigwedge^{0,k}\otimes\fg$, we may replace
  $\partial_{\fg}\psi$ by its component in that subspace and hence by
  $\partial_{\fg_-}\psi$. This shows that $\underline{\partial^*}$ is
  adjoint to $\partial_{\fg_-}$, which implies the required
  disjointness. All remaining claims now follow directly from
  Proposition 3.10 of \cite{Cap2016}.
\end{proof}

 As noted in \S \ref{S:Cartan}, the curvature
 of a Cartan geometry is encoded in the curvature function $\kappa$,
 which has values in $L(\bigwedge^2(\fg/\fp),\fg)$. Normality of the
 Cartan geometry then exactly means that the values of $\kappa$
 actually lie in the subspace $\ker(\partial^*)$.

\subsection{Lie algebra cohomology and Tanaka
  prolongation}\label{S:Tanaka}
To obtain a more explicit description of the codifferential
$\partial^*$, we next study the Lie algebra cohomology differential
$\partial_{\fg_-}$. This will also allow us to verify that
$\tgr(\fg)\cong\fg$ is the full prolongation of its non--positive
part, which is the last ingredient needed to prove Theorem
\ref{T:G0str-Cartan}. This can be expressed in terms of the Lie
algebra cohomology $H^*(\fg_-,\fg)$. 

Recall that $\fg=\fq\ltimes\fa$, with the abelian ideal
$\fa=V_n\otimes\bbR^m$ and the reductive subalgebra
$\fq=\mathfrak{sl}_2\times\mathfrak{gl}_m$. Moreover, $\fp\subset\fq$
and $\fg_-=\bbR\cdot\sfX\oplus\fa$. Now proceeding similarly as above,
we view $L(\bigwedge^k(\fg/\fq),\fg)$ as the subspace of
$C^k(\fg,\fg)$ consisting of those maps which vanish upon insertion of
one element of $\fq$. This can then be identified with the chain
space $C^k(\fa,\fg)$, where we view $\fg$ as an $\fa$--module via the
adjoint action. This identification is even $\fq$--equivariant, since
$\fg=\fq\oplus\fa$ as a $\fq$--module. 

On the chain spaces $C^*(\fa,\fg)$, we again have a Lie algebra
cohomology differential, which we denote by
$\partial_{\fa}$. Explicitly, this differential is given by
\[
\partial_{\fa}\varphi(X_0,\dots,X_k)=\textstyle
\sum_i(-1)^i[X_i,\varphi(X_0,\dots,\widehat{X_i},\dots,X_k)]. 
\]
Now define $\omega^\sfX\in{\fg_-}^*$ to be the functional sending
$\sfX$ to $1$ and vanishing on $\fa\subset\fg_-$.  Given $\phi \in
C^k(\fg_-,\fg)$, we have $\phi = \omega^\sfX \wedge \phi_1 + \phi_2$,
for elements $\phi_1 \in C^{k-1}(\fa,\fg)$ and $\phi_2 \in
C^k(\fa,\fg)$. Explicitly, we have $\phi_1=i_{\sfX}\phi$ and
$\phi_2=\phi-\omega^\sfX \wedge \phi_1$. We express this by writing
$\phi = \begin{psmallmatrix} \phi_1\\ \phi_2 \end{psmallmatrix}$.

\begin{lemma}\label{L:delg-}
  In terms of the notation just introduced, the Lie algebra cohomology
  differential $\partial_{\fg_-}$ is given by
 \begin{equation} \label{E:delg-}
   \partial_{\fg_-}\begin{pmatrix} \phi_1 \\ \phi_2\end{pmatrix}
   = \begin{pmatrix} -\partial_\fa \phi_1 +
     \sfX\cdot\phi_2\\ \partial_\fa \phi_2\end{pmatrix}. 
 \end{equation}
\end{lemma}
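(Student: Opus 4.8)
The plan is to verify the formula by evaluating the Chevalley--Eilenberg differential $\partial_{\fg_-}$ directly on suitable tuples of arguments, exploiting two features of $\fg_- = \bbR\cdot\sfX\oplus\fa$: that $\fa$ is abelian, and that $\fa$ is an ideal in $\fg$ (so $[\sfX,a]\in\fa$ for all $a\in\fa$). Since a cochain in $C^\bullet(\fg_-,\fg)$ is determined by its values on tuples whose entries are each either $\sfX$ or lie in $\fa$, and since $\sfX\wedge\sfX=0$, it suffices to evaluate $\partial_{\fg_-}\phi$ on tuples containing $\sfX$ at most once. This produces exactly two cases, giving the lower and the upper entry of \eqref{E:delg-} respectively.

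First I would evaluate on $(a_0,\dots,a_k)$ with all $a_i\in\fa$, which computes the $\fa$--component of $\partial_{\fg_-}\phi$, i.e.\ $(\partial_{\fg_-}\phi)_2$. In the defining formula for $\partial_{\fg_-}$ the bracket terms involve $[a_i,a_j]=0$ and hence drop out, leaving only $\sum_i(-1)^i[a_i,\phi(\dots\widehat{a_i}\dots)]$; moreover $\phi$ evaluated on $\fa$--arguments equals $\phi_2$, since $\omega^\sfX$ annihilates $\fa$. Comparing with the given formula for $\partial_\fa$ shows immediately that $(\partial_{\fg_-}\phi)_2=\partial_\fa\phi_2$, the bottom entry.

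Next I would evaluate on $(\sfX,a_1,\dots,a_k)$ with $a_i\in\fa$, which computes $i_\sfX(\partial_{\fg_-}\phi)=(\partial_{\fg_-}\phi)_1$, the top entry. The $\sfX$--action term gives $[\sfX,\phi_2(a_1,\dots,a_k)]$; the remaining action terms, after using $\phi(\sfX,\dots)=\phi_1(\dots)$, assemble into $\sum_{i\ge 1}(-1)^i[a_i,\phi_1(\dots\widehat{a_i}\dots)]=-\partial_\fa\phi_1$; and the only surviving bracket terms are $\sum_{j\ge 1}(-1)^j\phi_2([\sfX,a_j],a_1,\dots,\widehat{a_j},\dots,a_k)$, where I use $[\sfX,a_j]\in\fa$ and $[a_i,a_j]=0$. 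It then remains to recognize the first term together with this last sum as the natural action $\sfX\cdot\phi_2$ of $\sfX$ on the $\fg$--valued cochain $\phi_2\in C^k(\fa,\fg)$.

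The hard part will be this final reassembly, which is where the sign bookkeeping must be handled carefully. The action of $\sfX$ on $\phi_2$ is $(\sfX\cdot\phi_2)(a_1,\dots,a_k)=[\sfX,\phi_2(a_1,\dots,a_k)]-\sum_j\phi_2(a_1,\dots,[\sfX,a_j],\dots,a_k)$, acting both on the value and on each slot. Matching the sum against my bracket terms requires transporting the entry $[\sfX,a_j]$ from the first slot, where the Chevalley--Eilenberg convention places it, back to the $j$--th slot; this costs a sign $(-1)^{j-1}$, which exactly cancels the factor $(-1)^j$ coming from the differential and reproduces the $-\sum_j\phi_2(a_1,\dots,[\sfX,a_j],\dots,a_k)$ term. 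Once this cancellation is confirmed, the top entry equals $-\partial_\fa\phi_1+\sfX\cdot\phi_2$ and \eqref{E:delg-} follows.
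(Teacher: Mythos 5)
Your proposal is correct and is essentially the paper's own proof: the paper likewise evaluates the Chevalley--Eilenberg differential once on $\bigwedge^{k+1}\fa$ to get the bottom entry $\partial_\fa\phi_2$, and once on $(\sfX,v_1,\dots,v_k)$ to get $-\partial_\fa\phi_1+\sfX\cdot\phi_2$, including the same identification of the surviving bracket terms with the slot--action part of $\sfX\cdot\phi_2$. The only cosmetic difference is that the paper first splits $\phi=\omega^\sfX\wedge\phi_1+\phi_2$ and differentiates each summand separately, whereas you evaluate $\partial_{\fg_-}\phi$ directly and sort the terms afterwards.
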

\begin{proof}
  Take $\phi_1,\phi_2 \in C^*(\fa,\fg)$ of degrees $k-1$ and $k$,
  respectively. Evaluating on $\bigwedge^{k+1} \fa$, we clearly have
  $\partial_{\fg_-}(\omega^\sfX \wedge \phi_1) = 0$ and
  $\partial_{\fg_-} \phi_2 = \partial_\fa \phi_2$. Next, simple direct
  computations show that for elements $v_i \in \fa$, we obtain
\[
\partial_{\fg_-}(\omega^\sfX \wedge \phi_1)(\sfX,v_1,...,v_k) =
-\partial_\fa\phi_1(v_1,\dots,v_k),
\]
while $(\partial_{\fg_-} \phi_2)(\sfX,v_1,...,v_k)$ equals
 \begin{align*}
   \sfX \cdot (\phi_2&(v_1,...,v_k)) + \textstyle\sum_{j=1}^k (-1)^j \phi_2([\sfX,v_j],v_1,...,\widehat{v}_j,...,v_k)\\
   =&(\sfX\cdot \phi_2)(v_1,...,v_k).
\end{align*}
\end{proof}

The $\fq$--equivariant decomposition $\fg=\fq\oplus\fa$ also induces a
decomposition of $C^k(\fa,\fg)$ according to the values of multilinear
maps.  While the first factor is not a space of cochains, we still
denote this decomposition by $C^k(\fa,\fg)=C^k(\fa,\fq)\oplus
C^k(\fa,\fa)$. Observe that from the definition of $\partial_{\fa}$ it
follows readily that $C^k(\fa,\fa)\subset\ker(\partial_{\fa})$ and
that $\im(\partial_{\fa})\subset C^{k+1}(\fa,\fa)$. Using this, we can
now formulate the result on the full prolongation.
  
\begin{prop}[Tanaka prolongation]\label{P:Tanaka}  Let $n \geq 2$, $m \geq 1$, with $(n,m) \neq (2,1)$.  Then the graded Lie algebra
  $\tgr(\fg)\cong\fg$ is the full prolongation of its non--positive
  part.
 \end{prop}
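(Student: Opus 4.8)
The plan is to recast the full prolongation property cohomologically and to compute the resulting cohomology by combining the structure of $\fg_-=\bbR\sfX\ltimes\fa$ with $\fsl_2\times\fgl_m$ representation theory. By Tanaka's theory (cf.\ Definition 2.10 of \cite{Cap2016}), the graded Lie algebra $\tgr(\fg)\cong\fg$ is the full prolongation of its non--positive part if and only if $H^1(\fg_-,\fg)_\ell=0$ for every $\ell\geq1$, where the grading on $H^*(\fg_-,\fg)$ is the one induced by the grading element $\sfZ$. So the whole statement reduces to this positive--degree vanishing.

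First I would use that $\fa\subset\fg_-$ is an abelian ideal with $\fg_-/\fa\cong\bbR\sfX$. By Lemma~\ref{L:delg-}, writing $\phi=\begin{psmallmatrix}\phi_1\\\phi_2\end{psmallmatrix}$ exhibits $C^\bullet(\fg_-,\fg)$, up to a degree shift, as the mapping cone of the chain map $\sfX\cdot$ on $C^\bullet(\fa,\fg)$. The associated short exact sequence $0\to C^{\bullet-1}(\fa,\fg)\to C^\bullet(\fg_-,\fg)\to C^\bullet(\fa,\fg)\to0$ produces a long exact sequence whose connecting homomorphism is precisely the induced action of $\sfX$ on $H^\bullet(\fa,\fg)$. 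Keeping track of the $\sfZ$--degree (in which $\omega^\sfX$ has degree $+1$ and $\sfX$ lowers degree by one) and using that $H^0(\fa,\fg)=\fa$ is concentrated in degrees $-n-1,\dots,-1$, the $H^0$--contribution drops out for $\ell\geq1$ and one is left with
\[
H^1(\fg_-,\fg)_\ell\ \cong\ \ker\bigl(\sfX\colon H^1(\fa,\fg)_\ell\to H^1(\fa,\fg)_{\ell-1}\bigr),\qquad(\ell\geq1).
\]
Thus it suffices to prove that $\sfX$ acts injectively on $H^1(\fa,\fg)$ in every positive degree.

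Next I would compute $H^1(\fa,\fg)$ using that $\fa$ is abelian and $\fg=\fq\oplus\fa$, with $\fa$ acting trivially on $\fa$ and mapping $\fq$ into $\fa$ through the faithful representation $\rho\colon\fq\to\fgl(\fa)$. As already noted before Proposition~\ref{P:Tanaka}, $\partial_\fa$ then kills $\fa$--valued cochains and sends $\fq$--valued ones into $\fa$--valued ones, so that
\[
H^1(\fa,\fg)=\ker\bigl(\rho_1\colon\fa^*\otimes\fq\to\Lambda^2\fa^*\otimes\fa\bigr)\ \oplus\ \bigl(\fgl(\fa)/\rho(\fq)\bigr),
\]
where $\rho_1=\partial_\fa$ on $C^1(\fa,\fq)$. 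The first summand is exactly the classical first prolongation $\fq^{(1)}$ of $\fq\subset\fgl(\fa)$, while the second sits in the $\fa$--valued part. Since $\sfX$ has $\fsl_2$--weight $+2$, its kernel on any $\fsl_2$--module is the space of highest--weight vectors; on $\fgl(\fa)\cong\fa\otimes\fa^*$ the $\sfZ$--degree equals $-\tfrac12$ times the $\fsl_2$--weight, so every highest--weight vector of $\fgl(\fa)/\rho(\fq)$ lies in degree $\leq0$ and contributes nothing positive. Hence the only possible positive--degree kernel comes from $\fq^{(1)}$, on which degree and $\fsl_2$--weight are rigidly linked by $\mu=(n+2)-2\ell$.

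The heart of the argument---and the step I expect to be the main obstacle---is therefore the explicit determination of $\fq^{(1)}=(S^2\fa^*\otimes\fa)\cap(\fa^*\otimes\fq)$. Using $V_n^*\cong V_n$ and the Clebsch--Gordan rule $V_n\otimes V_2\cong V_{n+2}\oplus V_n\oplus V_{n-2}$, I would decompose $\fa^*\otimes\fq$ into $\fsl_2\times\fgl_m$--isotypic components and impose the symmetry condition $\phi(u_1)\cdot u_2=\phi(u_2)\cdot u_1$. The claim to establish is that $\fq^{(1)}$ is $V_{n+2}$--isotypic as an $\fsl_2$--module, i.e.\ that the prolongation condition annihilates every component of highest weight $\leq n$. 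Granting this, all highest--weight vectors of $\fq^{(1)}$ have weight $n+2$, hence degree $\ell\leq0$, so $\ker(\sfX)$ vanishes in positive degree and the displayed isomorphism forces $H^1(\fg_-,\fg)_\ell=0$ for all $\ell\geq1$. The hypothesis $(n,m)\neq(2,1)$ enters exactly here: for $n=2,\ m=1$ the three--dimensional module $V_2$ is special and a highest--weight vector of weight $n=2$ survives in $\fq^{(1)}$ (reflecting the parabolic model of third--order scalar ODE), so this case must be excluded. Concretely, I would construct candidate highest--weight vectors in the $V_n$-- and $V_{n-2}$--summands of $\fa^*\otimes\fq$ and verify by a direct computation that the symmetry constraint eliminates them for all $(n,m)\neq(2,1)$.
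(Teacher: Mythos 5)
Your overall strategy is sound and genuinely different in organization from the paper's. Where the paper analyzes a positive-homogeneity cocycle $\binom{A}{\phi}$ directly, you package Lemma \ref{L:delg-} into a mapping-cone long exact sequence, reduce to injectivity of $\sfX$ on $H^1(\fa,\fg)$ in positive degrees, compute $H^1(\fa,\fg)\cong\fq^{(1)}\oplus\bigl(\fgl(\fa)/\rho(\fq)\bigr)$, and dispose of the second summand by the weight/degree relation. These steps all check out: $H^0(\fa,\fg)=\fa$ because $\fq$ acts faithfully on $\fa$, so the $H^0$-term indeed drops out in positive degree; the splitting of $H^1(\fa,\fg)$ is $\fq$-invariant, so $\ker(\sfX)$ may be computed summand by summand; and since the trivial $\fsl_2$-summands of $\fgl(\fa)$ all lie in $\rho(\fgl_m)$, the highest-weight vectors of $\fgl(\fa)/\rho(\fq)$ even sit in strictly negative degree. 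You also locate the exceptional case correctly: for $(n,m)=(2,1)$ one has $\fq=\fco(2,1)$ and $\fq^{(1)}\cong V_2$ in degree $+1$. The paper's coboundary step (absorbing $\lambda\,\ad_\sfY|_\fa$ via $\partial_{\fg_-}(-\lambda\sfY)$) is handled automatically in your setup, since $\rho(\fq)$ is quotiented out.

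However, the step you yourself call the heart of the argument --- that the symmetry condition kills every $V_n$- and $V_{n-2}$-component of $\fa^*\otimes\fq$, so that $\fq^{(1)}$ has no $\fsl_2$-summands of highest weight $\leq n$ --- is only announced, not carried out, and without it the proof is incomplete. This is precisely where the paper takes a shortcut you could borrow: since $\fq$ acts irreducibly on $\fa$, is not on the Kobayashi--Nagano list of infinite-type algebras \cite{KN1965}, and $\fa^*\oplus\fq\oplus\fa$ is not a $|1|$-graded semisimple Lie algebra for the admissible $(m,n)$, the main result of \cite{KN1964} gives $\fq^{(1)}=0$ outright --- stronger than the $V_{n+2}$-isotypic statement you aim for, and with no Clebsch--Gordan computation. (The paper uses this as injectivity of the Spencer differential to kill $\phi_\fq$ and then argues on the $\fa$-valued part separately; your route would use the same fact to kill the $\fq^{(1)}$-summand of $H^1(\fa,\fg)$.) If you prefer to stay elementary, the direct verification you sketch is feasible, but it must actually be written down; as it stands, the proposal reduces the proposition to an unproven (though true) representation-theoretic claim.
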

 \begin{proof} 
   It is well known that the statement is equivalent to the fact that
   $H^1(\fg_-,\fg)$ is concentrated in non--positive homogeneities,
   compare with Proposition 2.12 of \cite{Cap2016}. In the vector
   notation introduced above, an element of $C^1(\fg_-,\fg)$ can be
   written as $\binom{A}{\phi}$ for $A\in\fg=C^0(\fa,\fg)$ and
   $\phi\in C^1(\fa,\fg)$. Now as indicated above, we can decompose
   $\phi=\phi_{\fa}+\phi_{\fq}$ according to the values. By Lemma
   \ref{L:delg-}, $0=\partial_{\fg_-}\binom{A}{\phi} = \begin{psmallmatrix} -\partial_\fa(A) + \sfX \cdot \phi\\ \partial_\fa \phi \end{psmallmatrix}$ implies
   $0=\partial_{\fa}\phi=\partial_{\fa}\phi_{\fq}$. But now by
   definition, the restriction $C^1(\fa,\fq)\to C^2(\fa,\fa)$ of
   $\partial_{\fa}$ is exactly the Spencer differential associated to
   $\fq\subset \fa^*\otimes\fa$.

   We note that $\fq$ acts irreducibly on $\fa$, and is not in the list of infinite-type algebras in \cite{KN1965}.  Given the assumptions on $m$ and $n$, $\fa^* \op \fq \op \fa$ is not a $|1|$-graded semisimple Lie algebra.  (The list of these algebras is well-known -- see\ \S 3.2.3 in \cite{CS2009}.) Thus, by the main result of  \cite{KN1964} by Kobayashi and Nagano, $\fq\subset \fa^*\otimes\fa$ has trivial first
   prolongation, so this Spencer differential is injective. 
   Hence, we conclude that $\phi_\fq = 0$.  
   
   We have already seen that
   $\sfX\cdot\phi=\partial_\fa(A)$. Now we can decompose the
   representation $\fa^*\otimes\fa$ of $\fq$ into irreducible
   components. Writing this as $\fq \op \oplus_j U_j$, we can accordingly decompose $\phi=B + \sum_j\phi_j$ and
   this decomposition is preserved by the action of $\sfX\in\fq$. But
   on the other hand, $\partial_\fa:\fg\to\fa^*\otimes\fa$ vanishes on
   $\fa\subset\fg$ and coincides with the inclusion on
   $\fq\subset\fg$. Thus we conclude that $\sfX\cdot\phi_j=0$ for all
   $j$, which means that these $\phi_j$ actually have to be
   contained in highest weight spaces for the action of $\fsl_2$. These
   are all represented by positive powers of $\sfX$ and thus contained in
   negative homogeneity.

   The upshot of this discussion is that if $\binom{A}{\phi}$ lies in
   the kernel of $\partial_{\fg_-}$ and has positive homogeneity, then
   $\phi=\phi_\fa \in C^1(\fa,\fa)$ must satisfy $\phi = \lambda\, \ad_\sfY|_\fa$, and hence
   $\sfX \cdot \phi = \lambda\, \ad_\sfH|_\fa$.  By the homogeneity assumption $A$
   has to be homogeneous of non-negative degree, hence lies in $\fq$, so
   $\partial_\fa(A) = \sfX \cdot \phi$ implies $A=-\lambda\sfH$.  But then
   one immediately verifies that $\binom{A}{\phi}=\partial_{\fg_-}
   (-\lambda\sfY)$, which completes the proof. 
 \end{proof}

 As we have observed in \S \ref{S:Cartan} already, this completes
 the proof of Theorem \ref{T:G0str-Cartan}, so we have an equivalence
 of categories between filtered $G_0$--structures and regular normal
 Cartan geometries.
 
\subsection{A codifferential formula}\label{S:codiff-explicit}
To proceed towards a more explicit description of the codifferential
$\partial^*$, we continue identifying $C^k(\fa,\fg)$ with the subspace
of $C^k(\fg,\fg)$ of those cochains which vanish under insertion of an
element of $\fq$. Doing this, we can restrict the inner product from
\S \ref{S:codiff} to the subspace $C^k(\fa,\fg)$ and define a map
$\partial^*_{\fa}$ as the adjoint of the Lie algebra cohomology
differential $\partial_{\fa}$. We further observe that the
decomposition $C^k(\fa,\fg)=C^k(\fa,\fq)\oplus C^k(\fa,\fa)$ is
orthogonal with respect to our inner product. The basic properties of
$\partial^*_{\fa}$ are as follows.

\begin{lemma}\label{L:q-valued} \quad

 \begin{enumerate}
 \item The map $\partial^*_{\fa}$ is $\fq$--equivariant. 
 \item For each $k$, we have $\im(\partial^*_{\fa})\subset
  C^k(\fa,\fq)\subset\ker(\partial^*_{\fa})$. 
 \item For $k=1$, we get $\im(\partial^*_{\fa})=C^1(\fa,\fq)$ and
  $\ker(\partial^*_{\fa})$ is the direct sum of $C^1(\fa,\fq)$ and the
  orthocomplement of $\fq\subset\fa^*\otimes\fa=C^1(\fa,\fa)$
  included via the natural action of $\fq$ on $\fa$.
 \end{enumerate}
\end{lemma}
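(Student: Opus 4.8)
The plan is to prove the three assertions about $\partial^*_\fa$ by exploiting the $\fq$--equivariant orthogonal decomposition $C^k(\fa,\fg) = C^k(\fa,\fq) \oplus C^k(\fa,\fa)$ together with the adjointness relation $\langle \partial_\fa \phi, \psi \rangle = \langle \phi, \partial^*_\fa \psi \rangle$.

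First I would establish assertion (1). The differential $\partial_\fa$ is defined purely via the Lie bracket of $\fg$ and the adjoint action of $\fa$ on $\fg$, both of which are $\fq$--equivariant (since $\fq$ normalizes the ideal $\fa$ and acts on $\fg$ by the adjoint action). Hence $\partial_\fa$ is $\fq$--equivariant. Its adjoint $\partial^*_\fa$ is then $\fq$--equivariant as well, using that the inner product satisfies $\langle Au, v \rangle = \langle u, A^\top v \rangle$ from \eqref{E:innprod}; the argument is identical in form to the computation already carried out in the proof of Lemma \ref{L:codiff} for $\partial^*$, just restricted to the subcomplex $C^*(\fa,\fg)$. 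Concretely, for $A \in \fq$ one chases $\langle \phi, A \partial^*_\fa \psi \rangle = \langle A^\top \phi, \partial^*_\fa \psi \rangle = \langle \partial_\fa(A^\top \phi), \psi \rangle = \langle A^\top \partial_\fa \phi, \psi \rangle = \langle \partial_\fa \phi, A\psi \rangle = \langle \phi, \partial^*_\fa(A\psi)\rangle$, so $A\partial^*_\fa = \partial^*_\fa A$.

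Next I would treat assertion (2). The key structural fact, already recorded just before the statement, is that $\im(\partial_\fa) \subset C^{k+1}(\fa,\fa)$ and $C^k(\fa,\fa) \subset \ker(\partial_\fa)$, which hold because $\fa$ is an abelian ideal, so bracketing with elements of $\fa$ always lands in $\fa$. Dualizing under the orthogonal decomposition immediately gives the $\partial^*_\fa$--analogues: since $\partial^*_\fa$ is adjoint to $\partial_\fa$ and the decomposition $C^k(\fa,\fg) = C^k(\fa,\fq)\oplus C^k(\fa,\fa)$ is orthogonal, the inclusion $\im(\partial_\fa) \subset C^*(\fa,\fa)$ dualizes to $\ker(\partial^*_\fa) \supset C^*(\fa,\fq)$ (the orthocomplement of $\im \partial_\fa$ contains $(C^*(\fa,\fa))^\perp = C^*(\fa,\fq)$), and $C^*(\fa,\fa)\subset\ker(\partial_\fa)$ dualizes to $\im(\partial^*_\fa)\subset (\ker\partial_\fa)^\perp \subset C^*(\fa,\fq)$. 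Stringing these together yields $\im(\partial^*_\fa)\subset C^k(\fa,\fq)\subset\ker(\partial^*_\fa)$.

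The main work is assertion (3), the sharp description in degree $k=1$. For the image, I would show $\im(\partial^*_\fa) = C^1(\fa,\fq)$ by combining the inclusion from (2) with a surjectivity argument: computing $\partial^*_\fa$ on $C^2(\fa,\fg)$ and checking every element of $C^1(\fa,\fq)$ is hit. The cleanest route is to analyze $\partial_\fa \colon C^1(\fa,\fq) \to C^2(\fa,\fa)$ (the Spencer-type map) and $\partial_\fa\colon C^0(\fa,\fg)=\fg \to C^1(\fa,\fa)$, noting that by the Tanaka computation in Proposition \ref{P:Tanaka} the relevant $\fsl_2\times\fgl_m$--representation theory controls the kernels and images exactly; the orthocomplement statement for $\ker(\partial^*_\fa)$ then falls out from decomposing $C^1(\fa,\fa) = \fa^*\otimes\fa$ into the copy of $\fq$ (the image of $\fq \hookrightarrow \fa^*\otimes\fa$ via the natural action) and its orthocomplement, and observing $\ker(\partial^*_\fa) = C^1(\fa,\fq) \oplus (\im\partial_\fa|_{C^0})^\perp$ within $C^1(\fa,\fa)$. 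The hard part will be pinning down that the $\fq$--valued part of $\ker(\partial^*_\fa)$ is \emph{all} of $C^1(\fa,\fq)$ while the $\fa$--valued part is exactly the orthocomplement of the embedded $\fq$; this requires knowing $\im(\partial_\fa\colon \fg \to \fa^*\otimes\fa)$ is precisely the copy of $\fq$, which is where I would invoke that $\partial_\fa$ vanishes on $\fa$ and restricts to the natural inclusion $\fq \hookrightarrow \fa^*\otimes\fa$ (as used in the proof of Proposition \ref{P:Tanaka}), so its orthogonal complement is the stated orthocomplement of $\fq$.
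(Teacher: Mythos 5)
Your proposal is correct and follows essentially the same route as the paper: equivariance of $\partial^*_\fa$ by the same adjoint-chasing as in Lemma \ref{L:codiff}, part (2) by dualizing $\im(\partial_\fa)\subset C^*(\fa,\fa)\subset\ker(\partial_\fa)$ against the orthogonal splitting $C^k(\fa,\fg)=C^k(\fa,\fq)\oplus C^k(\fa,\fa)$, and part (3) from $\im(\partial_\fa|_{\fg})=\fq\subset\fa^*\otimes\fa$ together with injectivity of the Spencer differential on $C^1(\fa,\fq)$ established in Proposition \ref{P:Tanaka}. No gaps.
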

\begin{proof}
  (1) is proved in exactly the same way as equivariance of the
  codifferential in Lemma \ref{L:codiff}.

  (2) In \S \ref{S:Tanaka} we have observed that
  $C^k(\fa,\fa)\subset\ker(\partial_{\fa})$ and
  $\im(\partial_{\fa})\subset C^k(\fa,\fa)$ for each $k$. By the
  definition as an adjoint, we see that
  $\ker(\partial^*_{\fa})=\im(\partial_{\fa})^\perp$ and
  $\im(\partial^*_{\fa})=\ker(\partial_{\fa})^\perp$. Thus (2) follows
  from the fact that $C^k(\fa,\fq)=C^k(\fa,\fa)^\perp$ for each $k$.

  (3) We have already observed in the proof of Proposition
  \ref{P:Tanaka} that $\partial_{\fa}:\fg\to C^1(\fa,\fg)$ vanishes on
  $\fa$ and restricts to the representation $\fq\to \fa^*\otimes\fa$ on
  $\fq$. Thus $\im(\partial_{\fa})=\fq\subset C^1(\fa,\fa)\subset
  C^1(\fa,\fg)$, which together with the arguments from (2) implies
  the claimed description of $\ker(\partial^*_{\fa})$.

  On the other hand, $\partial_{\fa}:C^1(\fa,\fg)\to C^2(\fa,\fg)$,
  vanishes on $C^1(\fa,\fa)$ while in the proof of Proposition
  \ref{P:Tanaka} we have seen that it restricts to an injection on
  $C^1(\fa,\fq)$. Thus $\ker(\partial_{\fa})=C^1(\fa,\fa)$, and the
  description of $\im(\partial^*_{\fa})$ in degree one follows.
\end{proof}
 
As above, we view $C^k(\fa,\fg)$ as the subspace of
$L(\bigwedge^k(\fg/\fp),\fg)$ consisting of those elements which
vanish under insertion of the element $\sfX$.  Given the basis $\{
\sfY,\sfH, \sfe^a_b, \sfX, \sfv^i_b \}$ of $\fg$, let $\{ \eta^\sfY,
\eta^\sfH, \eta^b_a, \omega^\sfX, \omega_i^b \}$ be the dual basis.  

\begin{prop} \label{P:hds} In terms of the notation from \S \ref{S:Tanaka}, 
  the codifferential $\partial^*$ (on horizontal $k$-forms) is given by
 \begin{equation} \label{E:hds}
   \partial^*\begin{pmatrix} \phi_1 \\ \phi_2\end{pmatrix} = \begin{pmatrix} -\partial^*_\fa \phi_1\\ \partial^*_\fa \phi_2 + \sfY \cdot \phi_1\end{pmatrix},
 \end{equation}
 where $\phi_1 \in C^{k-1}(\fa,\fg)$ and $\phi_2 \in C^k(\fa,\fg)$.
 \end{prop}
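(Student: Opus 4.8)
The plan is to compute $\partial^*$ on horizontal forms through its description as an adjoint, exploiting the block structure of $\partial_{\fg_-}$ recorded in Lemma \ref{L:delg-}. Recall from the proof of Proposition \ref{P:codiff} that, under the identification of the associated graded of $L(\bigwedge^k(\fg/\fp),\fg)$ with $C^k(\fg_-,\fg)$, the map induced by $\partial^*$ is precisely the adjoint of $\partial_{\fg_-}$ for the inner product inherited from \S\ref{S:codiff}; since $\partial^*$ is grading--preserving (Lemma \ref{L:codiff}), this induced map is $\partial^*$ itself under the identification $\fg\cong\tgr(\fg)$, so it suffices to compute this adjoint. The first step is then to observe that in the vector notation $\phi=\begin{psmallmatrix}\phi_1\\\phi_2\end{psmallmatrix}$ the decomposition $C^k(\fg_-,\fg)=\omega^\sfX\wedge C^{k-1}(\fa,\fg)\oplus C^k(\fa,\fg)$ is orthogonal, with the inner product the diagonal sum $\langle\begin{psmallmatrix}\phi_1\\\phi_2\end{psmallmatrix},\begin{psmallmatrix}\psi_1\\\psi_2\end{psmallmatrix}\rangle=\langle\phi_1,\psi_1\rangle+\langle\phi_2,\psi_2\rangle$. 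This reduces to $\omega^\sfX\perp\fa^*$ and $\langle\omega^\sfX,\omega^\sfX\rangle=1$, so that wedging with $\omega^\sfX$ is an isometry onto its image; both facts are immediate from the chosen orthogonal basis.

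Next I would read off the block form of $\partial_{\fg_-}$ from \eqref{E:delg-}, namely
\[
\partial_{\fg_-}=\begin{pmatrix} -\partial_\fa & \sfX\cdot\\ 0 & \partial_\fa\end{pmatrix},
\]
and take its adjoint blockwise. Because the decomposition is orthogonal, the adjoint is the transposed block matrix with each entry replaced by its own adjoint,
\[
\partial^*=\begin{pmatrix} -\partial^*_\fa & 0\\ (\sfX\cdot)^* & \partial^*_\fa\end{pmatrix},
\]
that is, $\partial^*\begin{psmallmatrix}\phi_1\\\phi_2\end{psmallmatrix}=\begin{psmallmatrix}-\partial^*_\fa\phi_1\\ (\sfX\cdot)^*\phi_1+\partial^*_\fa\phi_2\end{psmallmatrix}$. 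The diagonal entries already match \eqref{E:hds}, by the very definition of $\partial^*_\fa$ as the adjoint of $\partial_\fa$, so everything reduces to identifying the single off--diagonal operator $(\sfX\cdot)^*$ on $C^*(\fa,\fg)$.

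The crux is then to show $(\sfX\cdot)^*=\sfY\cdot$. Here the cochain action $\sfX\cdot$ on $C^k(\fa,\fg)=\bigwedge^k\fa^*\otimes\fg$ is the tensor--product action built from the action of $\sfX$ on the arguments in $\fa$ and the adjoint action on the values in $\fg$, so its adjoint is the tensor--product action built from the corresponding transposed operators. I would verify that, for the inner product of \S\ref{S:codiff}, the transpose of $\sfX$ is exactly $\sfY$ on both factors: on $\fa$ this is the relation $\langle\sfX u,v\rangle=\langle u,\sfY v\rangle$ encoded in \eqref{E:innprod}, and on $\fq\subset\fg$ it follows from $\langle A,B\rangle=\tr(A^\top B)$ together with $\sfY=\sfX^\top$ and cyclicity of the trace. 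This forces the adjoint of the full cochain action to be $\sfY\cdot$, completing the match with \eqref{E:hds}. I expect this identity to be the only genuine obstacle: it is exactly where the specific weights $\langle\sfv^i_b,\sfv^i_b\rangle=\tfrac{(n-i)!}{i!}$ enter, since they are precisely what makes $\sfX^\top=\sfY$ hold on $V_n$ (one checks $\sfX\cdot\sfv^i=\sfv^{i-1}$ and $\sfY\cdot\sfv^i=(n-i)(i+1)\sfv^{i+1}$ and compares norms). The minus sign attached to the argument slots in the differential must be tracked through the adjoint of the induced action on $\bigwedge^k\fa^*$, but it cancels and yields exactly $+\sfY\cdot$, as required.
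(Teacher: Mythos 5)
Your proposal is correct and follows essentially the same route as the paper: both reduce the computation of $\partial^*$ on horizontal forms to taking the adjoint of $\partial_{\fg_-}$ as given in Lemma \ref{L:delg-} (the paper does this by checking that the non--horizontal components of $\partial_\fg\psi$ pair trivially with horizontal forms, you via the associated--graded identification from Proposition \ref{P:codiff} plus grading--preservation, which amounts to the same thing), and both then hinge on the identity $(\sfX\cdot)^*=\sfY\cdot$, which the paper extracts from \eqref{E:innprod} and you verify explicitly on $V_n$. Your block--matrix transposition of $\partial_{\fg_-}=\begin{psmallmatrix}-\partial_\fa & \sfX\cdot\\ 0 & \partial_\fa\end{psmallmatrix}$ is just a tidier packaging of the four inner--product computations in the paper's proof.
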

 \begin{proof} 
 Take $\psi_1 \in C^{k-2}(\fa,\fg)$ and $\psi_2 \in C^{k-1}(\fa,\fg)$ and put
   $\psi=\omega^\sfX\wedge\psi_1+\psi_2 \in C^{k-1}(\fg_-,\fg)$.  In the proof of Proposition
   \ref{P:codiff}, we have seen that $\partial_{\fg}\psi$ and
   $\partial_{\fg_-}\psi$ differ only by elements of
   $\bigwedge^{1,k-1}\otimes\fg$. Using Lemma \ref{L:delg-} we thus
   conclude that, up to terms involving elements of
   $\{\eta^\sfY,\eta^\sfH, \eta^b_a \}$, we get
 \[
 \partial_{\fg}(\omega^\sfX \wedge \psi_1 + \psi_2) \equiv\omega^\sfX
 \wedge ( -\partial_\fa \psi_1 + \sfX \cdot \psi_2) + \partial_\fa
 \psi_2.
 \]
 
 Since $\{ \eta^\sfY, \eta^\sfH, \eta^b_a \}$ is orthogonal to the
 horizontal forms $\{ \omega^\sfX, \omega_i^b \}$, the formula for
 $\partial^*$ (on horizontal forms) follows from:
  \begin{align*}
    \langle \partial^*(\omega^\sfX \wedge \phi_1), \omega^\sfX \wedge
    \psi_1 \rangle &= \langle \omega^\sfX \wedge
    \phi_1, \partial_{\fg}(\omega^\sfX \wedge \psi_1) \rangle
    = \langle \omega^\sfX \wedge \phi_1, -\omega^\sfX \wedge \partial_\fa \psi_1 \rangle\\
    & = -\langle \omega^\sfX, \omega^\sfX \rangle \langle
    \phi_1, \partial_\fa \psi_1 \rangle
    = -\langle \omega^\sfX, \omega^\sfX \rangle \langle \partial^*_\fa \phi_1, \psi_1 \rangle\\
    &=\langle -\omega^\sfX \wedge \partial^*_\fa \phi_1, \omega^\sfX \wedge \psi_1 \rangle\\
 \langle \partial^*(\omega^\sfX \wedge \phi_1), \psi_2 \rangle
 &= \langle \omega^\sfX \wedge \phi_1, \partial_\fg\psi_2 \rangle
 = \langle \omega^\sfX \wedge \phi_1, \omega^\sfX \wedge \sfX\cdot\psi_2 \rangle\\
 &= \langle \phi_1, \sfX\cdot\psi_2 \rangle = \langle \sfY \cdot \phi_1, \psi_2 \rangle\\
 \langle \partial^* \phi_2, \omega^\sfX \wedge \psi_1 \rangle &= \langle \phi_2, \partial_\fg(\omega^\sfX \wedge \psi_1) \rangle = 0\\
 \langle \partial^* \phi_2, \psi_2 \rangle &= \langle
 \phi_2, \partial_\fg\psi_2 \rangle = \langle
 \phi_2, \partial_\fa\psi_2 \rangle = \langle \partial^*_\fa \phi_2,
 \psi_2 \rangle
 \end{align*}
 \end{proof}
 
 \begin{cor}\label{C:red} 
   Consider $\im(\partial^*)\subset\ker(\partial^*)\subset
   L(\bigwedge^k(\fg/\fp),\fg)$. Then the natural representation of
   $P$ on $\ker(\partial^*) / \im(\partial^*)$ is completely
   reducible, i.e.\ $\fg^1$ acts trivially.
 \end{cor}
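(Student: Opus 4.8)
The plan is to first reduce the statement to the claim that $\fg^1 = \bbR\sfY$ acts trivially on the quotient $\ker(\partial^*)/\im(\partial^*)$. Indeed, $\fg^1$ is the Lie algebra of $P_+$, so once it acts trivially the $P$--action factors through $G_0 = P/P_+$; and since $G_0$ is reductive while $\ker(\partial^*)/\im(\partial^*)$ is a subquotient of the rational $G_0$--representation $L(\bigwedge^k(\fg/\fp),\fg)$ (built by tensor constructions from the standard representation of $\tGL_m$ and a character of the $\bbR^\times$--factor, whose centre acts semisimply via the grading), this representation is then completely reducible. Concretely, it thus suffices to show that $\sfY\cdot\psi \in \im(\partial^*)$ for every $\psi \in \ker(\partial^*)$.

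For this I would run the standard Kostant-type homotopy argument. Regard $\partial^*$ as the restriction to horizontal cochains of the full codifferential $(\partial_\fg)^\top$ on $C^\bullet(\fg,\fg)$, which is legitimate by Lemma \ref{L:codiff}. On $C^\bullet(\fg,\fg)$ the module action $\cL_A$ of $A \in \fg$ satisfies the Cartan homotopy formula $\cL_\sfX = i_\sfX \partial_\fg + \partial_\fg i_\sfX$, where $i_\sfX$ denotes insertion of $\sfX$ into the first argument. Taking adjoints with respect to the inner product of \S \ref{S:codiff} and using that on $C^\bullet(\fg,\fg)$ one has $(\partial_\fg)^\top = \partial^*$, $(\cL_\sfX)^\top = \cL_{\sfX^\top} = \cL_\sfY$ (because $\sfX^\top = \sfY$), and $(i_\sfX)^\top = \omega^\sfX \wedge (\cdot)$ (because $\sfX$ is a unit vector orthogonal to the remaining basis, so its dual covector is $\omega^\sfX$), one obtains the operator identity $\cL_\sfY = \partial^* \circ (\omega^\sfX \wedge (\cdot)) + (\omega^\sfX \wedge (\cdot)) \circ \partial^*$ on all of $C^\bullet(\fg,\fg)$.

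Now take $\psi \in \ker(\partial^*)$, viewed as a horizontal cochain. The second term of the identity gives $\omega^\sfX \wedge \partial^*\psi = 0$. For the first term, note that $\omega^\sfX \wedge \psi$ is again horizontal: for $z \in \fp$ we have $i_z(\omega^\sfX \wedge \psi) = \omega^\sfX(z)\, \psi - \omega^\sfX \wedge i_z\psi = 0$, since $\omega^\sfX$ is dual to $\sfX$ and hence annihilates $\fp$, while $i_z\psi = 0$ by horizontality. Therefore $\partial^*(\omega^\sfX \wedge \psi)$ lies in $\im(\partial^*) \subset L(\bigwedge^k(\fg/\fp),\fg)$, and the homotopy identity reads $\sfY\cdot\psi = \partial^*(\omega^\sfX \wedge \psi) \in \im(\partial^*)$. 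This is exactly the desired triviality of the $\fg^1$--action, and with the reduction above it completes the proof.

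The step needing care --- the main obstacle --- is the verification of the adjointness relations at the level of the full cochain complex: that the transpose of the module action $\cL_\sfX$ is $\cL_\sfY$ (equivalently $\ad_\sfX^\top = \ad_\sfY$, together with the dual statement on the coadjoint factor $\bigwedge^k\fg^*$), and that $(i_\sfX)^\top$ is exterior multiplication by $\omega^\sfX$ (up to a positive normalizing constant, which is immaterial here). These follow from the properties of the inner product recorded in \eqref{E:innprod} --- in particular $\langle A,B\rangle = \tr(A^\top B)$ on $\fq$ forces $\sfX^\top = \sfY$ and $\sfH^\top = \sfH$ --- together with the orthogonality of the decomposition $\fg = \fq \op^\perp \fa$ and of the chosen basis; one checks $\langle [A,Z],Z'\rangle = \langle Z,[A^\top,Z']\rangle$ for $A \in \fq$ by a direct trace computation. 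As a consistency check, substituting the explicit formula of Proposition \ref{P:hds} into this homotopy identity reproduces the same conclusion.
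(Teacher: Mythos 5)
Your proof is correct, and it reaches the paper's conclusion by a genuinely different (more conceptual) route. The paper's proof is a two-line computation from the explicit component formula \eqref{E:hds} of Proposition \ref{P:hds}: writing $\phi=\begin{psmallmatrix}\phi_1\\ \phi_2\end{psmallmatrix}\in\ker(\partial^*)$, the kernel conditions give $\sfY\cdot\phi_1=-\partial^*_\fa\phi_2$, and together with $\sfY\cdot\omega^\sfX=0$ this identifies $\sfY\cdot\phi$ with $\partial^*\begin{psmallmatrix}\phi_2\\ 0\end{psmallmatrix}$. You instead dualize the algebraic Cartan homotopy formula $\cL_\sfX=i_\sfX\partial_\fg+\partial_\fg i_\sfX$ to obtain the operator identity $\cL_\sfY=\partial^*\circ(\omega^\sfX\wedge\cdot)+(\omega^\sfX\wedge\cdot)\circ\partial^*$, bypassing Proposition \ref{P:hds} entirely; note that since $\omega^\sfX\wedge\phi=\omega^\sfX\wedge\phi_2$, your homotopy produces exactly the same preimage as the paper's. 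The adjointness relations you flag as the delicate point ($\ad_\sfX^\top=\ad_\sfY$ on all of $\fg$, hence $(\cL_\sfX)^\top=\cL_\sfY$ on cochains, and $(i_\sfX)^\top=\omega^\sfX\wedge(\cdot)$ up to positive degree-dependent constants that do not affect the conclusion on $\ker(\partial^*)$) do follow from \eqref{E:innprod} and the orthogonality of $\fg=\fq\op^\perp\fa$, and your check that $\omega^\sfX\wedge\psi$ is again horizontal is the needed step to land in the correct $\im(\partial^*)\subset L(\bigwedge^k(\fg/\fp),\fg)$. What your approach buys is independence from the explicit formula \eqref{E:hds} and a cleaner structural explanation (the $\fg^1$--action is chain-homotopic to zero for $\partial^*$); what the paper's buys is brevity, since Proposition \ref{P:hds} is already in hand and encodes the same adjointness data. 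Your opening reduction --- trivial $\fg^1$--action implies the $P$--action factors through the reductive group $G_0=P/P_+$, whence complete reducibility --- is a correct elaboration of what the paper treats as the definition via its ``i.e.''.
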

 
 \begin{proof}
   Let $\phi \in \ker(\partial^*)$.  From \eqref{E:hds},
   $\partial^*_\fa \phi_1 = 0$ and $\partial^*_\fa \phi_2 + \sfY \cdot
   \phi_1 = 0$.  Since $\sfY \cdot \omega^\sfX = -\omega^\sfX \circ
   \ad_\sfY = 0$, then $\sfY \cdot \phi = \omega^\sfX \wedge (\sfY
   \cdot \phi_1) + \sfY\cdot \phi_2$, so
 \[
 \sfY \cdot \phi = \begin{pmatrix} \sfY \cdot \phi_1 \\ \sfY \cdot
   \phi_2 \end{pmatrix} = \begin{pmatrix} -\partial_\fa^* \phi_2 \\
   \sfY\cdot \phi_2\end{pmatrix} = \partial^*\begin{pmatrix} \phi_2 \\
   0\end{pmatrix} \in \im(\partial^*).
 \]
 Hence, $\fg^1$ acts trivially on $\ker(\partial^*) /
 \im(\partial^*)$.
 \end{proof} 
 
  \subsection{Homogeneous examples of C-class ODE}
  \label{S:hom-C-class}

 It is well-known that the submaximal (contact) symmetry dimension for
 scalar ODE of order $\geq 4$ is two less than that of the (maximally
 symmetric) trivial equation, except for orders 5 and 7 where it is
 only one less \cite{Olv1995}.  For these cases, explicit submaximally symmetric models are well-known:
 \begin{align} 
 9 (u'')^2 u^{(5)} - 45 u'' u''' u'''' + 40 (u''')^3 = 0; \label{E:5-ex}\\
 10 (u''')^3 u^{(7)} - 70 (u''')^2 u^{(4)} u^{(6)} - 49 (u''')^2 (u^{(5)})^2 \label{E:7-ex}\\
 + 280 u''' (u^{(4)})^2 u^{(5)} - 175 (u^{(4)})^4 = 0. \nonumber
 \end{align}
 These have $A_2 \cong \fsl_3$ and $C_2 \cong \fsp_4$ symmetry respectively.  

 Doubrov \cite{Dou2008} showed that \eqref{E:5-ex} and \eqref{E:7-ex} are Wilczynski-flat.  We will describe their Cartan curvatures, observe the vanishing under $\sfX$-insertions, and hence confirm that they are of C-class.
 
 The symmetry algebra $\fs \cong \fsl_3$ of $\cE$ given by \eqref{E:5-ex} is spanned by:
  \[
 \partial_t, \quad \partial_u, \quad t\partial_t, \quad u\partial_t, \quad t\partial_u, \quad u\partial_u, \quad t^2\partial_t + tu\partial_u, \quad tu\partial_t + u^2\partial_u.
 \]
 This is a homogeneous structure and (the restriction of the prolongation of) $\fs$ is infinitesimally transitive on $\cE$.  Fixing the point $o = \{ t = u = u_1 = u_3 = u_4 = 0,\, u_2 = 0 \} \in \cE$, let us define an alternative basis:
 \begin{align*}
 X &= \partial_t + t \partial_{u}, \quad
 H = -2(t \partial_t + 2 u \partial_{u}), \quad 
 Y = 2(u-t^2) \partial_t - 2t u \partial_{u}, \\
 T_4 &= \frac{1}{2} \partial_u, \quad T_2 = -\partial_t + t\partial_u, \quad T_0 = -3t\partial_t, \\
  T_{-2} &= -2(t^2+u)\partial_t - 2tu\partial_u, \quad T_{-4} = -2tu\partial_t - 2u^2\partial_u,
 \end{align*}
 This basis is adapted to $o$: 
 \begin{itemize}
 \item the isotropy is $\fs^0 = \tspan\{ H, Y \}$.
 \item the line field $E = \tspan\{ \partial_t + u_1 \partial_{u} + ... + u_4 \partial_{u_3} + u_5 \partial_{u_4} \}$ on $\cE$ has $E|_o = \tspan\{ X|_o \}$.  Moreover, $\{ X,H,Y \}$ is a standard $\fsl_2$-triple.
 \item the line field $F = \tspan\{ \partial_{u_4} \}$ on $\cE$ has $F|_o = \tspan\{ T_{-4}|_o \}$.
 \item The elements $X$ and $T_{-4}$ have filtration degree $-1$ and this induces a filtration on $T_o\cE$.
 \end{itemize}
 (Again, we are referring to the restrictions of prolongations of the vector fields above.)
 The element $H$ was used to decompose $\fs$ into weight spaces.  Here, $T_{2i}$ has $H$-weight $2i$, and these span an $\fsl_2$-irrep isomorphic to $V_4$.  Alternatively, we can view this in terms of $3 \times 3$ trace-free matrices.  The map sending $a_2 X + a_0 H + a_{-2} Y + \sum_{i=-2}^2 b_{2i} T_{2i}$ to
 \begin{align} \label{E:sl3-split}
 \begin{psmallmatrix}
 2 a_0 & \sqrt{2} a_2 & 0\\
 \sqrt{2} a_{-2} & 0 & \sqrt{2} a_2\\
 0 & \sqrt{2} a_{-2} & -2 a_0
 \end{psmallmatrix}
 + 
 \begin{psmallmatrix}
 b_0 & -\sqrt{2}\, b_2 & b_4\\
 \sqrt{2}\, b_{-2} & -2b_0 & \sqrt{2}\, b_2\\
 b_{-4} & -\sqrt{2}\, b_{-2} & b_0
 \end{psmallmatrix}.
 \end{align}
  is a Lie algebra isomorphism $\mathfrak{s} \to \mathfrak{sl}_3$.  In summary, we have $\fs \cong \fsl_2 \op V_4$ as $\fsl_2$-modules, and this is equipped with the filtration induced from above, e.g.\ 
 $\begin{psmallmatrix}
 0 & 1 & 0\\
 0 & 0 & 1\\
 0 & 0 & 0
 \end{psmallmatrix}$ and
 $\begin{psmallmatrix}
 0 & 0 & 0\\
 0 & 0 & 0\\
 1 & 0 & 0
 \end{psmallmatrix} \mod \fs^0$ have filtration degree $-1$.
 
 The decomposition $\fsl_3 \cong \fsl_2 \op V_4$ is in fact induced by a {\em principal} $\fsl_2$ subalgebra (all of which are conjugate in $\fs$).  Similar decompositions exist for $C_2 \cong \fsp_4$ (arising from the symmetries of \eqref{E:7-ex}) and $G_2$, so it will be useful to formulate this in a uniform way.  Let $\fs$ be a rank two complex simple Lie algebra.  Fix a Cartan subalgebra $\fh$, root system $\Delta$, and a simple root system $\alpha_1,\alpha_2 \in \fh^*$.  Let $\{ h_i, e_i, f_i \}_{i=1}^2$ be standard Chevalley generators, where $e_i$ and $f_i$ are root vectors for $\alpha_i$ and $-\alpha_i$ respectively.  Let $Z_1,Z_2 \in \fh$ be the dual basis to $\alpha_1,\alpha_2$.  We use the Bourbaki ordering, so that the Cartan matrices $c_{ij} = \langle \alpha_i, \alpha_j^\vee \rangle$ for $A_2,C_2, G_2$ are:
 \[
 \begin{psmallmatrix}
 2 & -1\\
 -1 & 2
 \end{psmallmatrix}, \quad
 \begin{psmallmatrix}
 2 & -1\\
 -2 & 2
 \end{psmallmatrix}, \quad
 \begin{psmallmatrix}
 2 & -1\\
 -3 & 2
 \end{psmallmatrix}.
 \]
 Define a principal $\fsl_2$-subalgebra via the standard $\fsl_2$-triple:
 \[
 H = 2(Z_1+Z_2), \quad X = e_1 + e_2, \quad Y = 
 \begin{cases} 
 \frac{2}{3} f_1 + \frac{1}{3} f_2, & \fs = A_2;\\
 f_1 + f_2, & \fs = C_2;\\
 2 f_1 + 3 f_2, & \fs = G_2.
 \end{cases}
 \]
 The element $H$ decomposes $\fs$ into weight spaces, e.g.\ the root space with root $k\alpha_1 + \ell\alpha_2$ has weight $2(k+\ell)$.  We apply the raising operator $X$ to the lowest root space to get the irreducible summand $V_n$.  Indeed, the $H$-weight of the lowest (or highest) roots and dimension counting yields the $\fsl_2$-decomposition $\fs = \fsl_2 \op V_n$, where $n=4$ for $A_2$, $n=6$ for $C_2$, and $n=10$ for $G_2$.  Note that the sum of root spaces $\fs_{-\alpha_1} \op \fs_{-\alpha_2}$ has $H$-weight $+2$, and is decomposed into a line lying in the $\fsl_2$ and a line lying in $V_n$.  Filtration degrees are indicated in Figure \ref{F:G2-filtration} for the $G_2$ case.  (The $A_2$ and $C_2$ cases are similar.)
 \begin{center}
 \begin{figure}[h]
 \includegraphics[width=6cm]{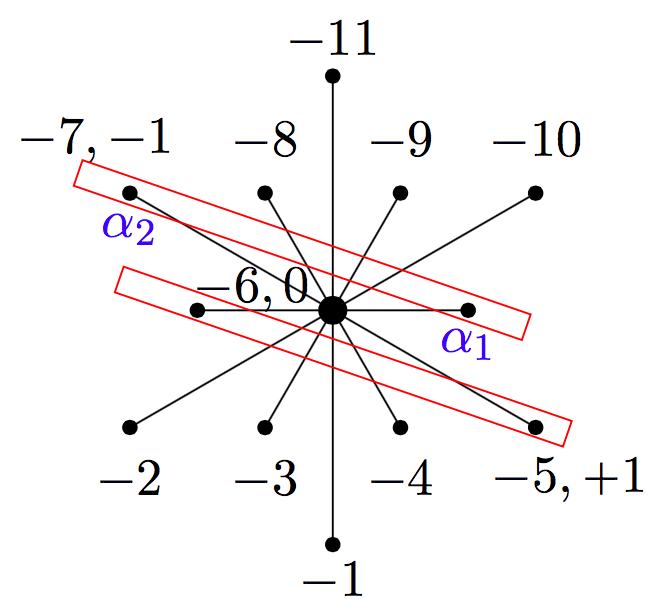}
 \caption{Filtration degrees associated with the $G_2$-model}
 \label{F:G2-filtration}
 \end{figure}
 \end{center}

 We now use all of this to describe the curvature of the 
 associated canonical Cartan geometry.  Recall from \S 1.5.15 and 1.5.16 of \cite{CS2009} that a homogeneous Cartan geometry $(\cG \to \cE, \omega)$ of type $(G,P)$ over a homogeneous base manifold $\cE \cong S / S^0$ is completely determined by a linear map $\alpha: \fs \to \fg$ that: (i) restricts to the derivative of the natural inclusion $\iota : S^0 \to P$ on $\fs^0$, (ii) is $S^0$-equivariant, i.e. $\Ad_{\iota(s)} \circ \alpha = \alpha \circ \Ad_s$ for $s \in S^0$, and (iii) induces a vector space isomorphism $\fs / \fs^0 \cong \fg / \fp$.  Letting $\tilde\kappa(x,y) = \alpha[x,y] - [\alpha(x),\alpha(y)]$, the curvature corresponds to $\kappa \in \bigwedge^2(\fg/\fp)^*\otimes \fg$ given by
 \begin{align} \label{E:hom-kappa}
 \kappa(u,v) = \tilde\kappa(\alpha^{-1}(u),\alpha^{-1}(v)).
 \end{align}

 Given the $\fsl_2$-decomposition $\fs = \fsl_2 \op V_n$, define $\alpha : \fs \inj \fg = \fgl_2 \op V_n = \fq \oplus \fa$ via the natural inclusion.  This satisfies the required conditions above,  but is moreover $\fsl_2$-equivariant.  This immediately implies that $\kappa$ given in \eqref{E:hom-kappa} vanishes upon insertion of $\sfX \,\mod \fp$, i.e. it is of the form $\begin{psmallmatrix} 0\\ \kappa_2 \end{psmallmatrix}$.
 
 We first check that $\kappa$ is normal.  Since $\alpha$ is $\fsl_2$-equivariant, then $\kappa$ can be viewed as an $\fsl_2$-{\em invariant} element of the $\fsl_2$-module $\bigwedge^2 (\fg/\fq)^* \otimes \fg \cong C^2(\fa,\fg)$.  From \eqref{E:hds}, it suffices to examine $\partial^*_\fa$ on this space.  From Lemma \ref{L:q-valued}, $C^2(\fa,\fq) \subset \ker(\partial^*_\fa)$ and $\im(\partial^*_\fa) \subset C^1(\fa,\fq)$.  As $\fsl_2$-modules, $C^1(\fa,\fq) \cong V_n \otimes (V_2 \op V_0) \cong V_{n+2} \oplus 2 V_n \oplus V_{n-2}$, which contains no trivial summands for $n \geq 3$.  By $\fsl_2$-equivariance of $\partial^*_\fa$, we conclude that $\partial^*\kappa = 0$, i.e.\ $\kappa$ is normal.
 
 A simple check using root diagrams shows that for all three cases $\kappa$ is regular and satisfies the stronger regularity condition from Remark \ref{R:strong-reg} in the $A_2$ and $C_2$ cases.  Thus, in these cases we have constructed the curvature of the canonical Cartan connection of an ODE, so these ODE are indeed of C-class.  Note that in the $A_2$ case, there is a unique trivial summand appearing in $\bigwedge^2 V_4 \otimes V_4$, so $\kappa$ necessarily lies here.  In the $C_2$ case, there are two trivial summands: one occurs in $\bigwedge^2 V_6 \otimes V_6$ and the other occurs inside $\bigwedge^2 V_6 \otimes \fsl_2$.  A direct computation shows that $\kappa$ lies in their sum, but not entirely in one summand or the other.
 
 In the $G_2$ case, $\kappa$ is not strongly regular.  The root spaces $\fs_{\alpha_1 + \alpha_2}$ and $\fs_{2\alpha_1 + \alpha_2}$ have filtration degrees $-8$ and $-9$ respectively, but these insert into $\tilde\kappa$ to produce a nontrivial element of $\fs_{3\alpha_1 + 2\alpha_2}$, which has degree $-11$.  Consequently, no corresponding $G_2$-invariant 11th order ODE exists.  We have constructed a non-ODE $G_2$-invariant filtered $G_0$-structure (with symbol algebra $\fm$).  Passing to the leaf space of the foliation by $E$, we obtain a $G_2$-invariant $\tGL_2$-structure on an 11-manifold.
 
\section{Wilczynski--flatness and the main result} \label{S:main} 

As we have observed in the end of \S \ref{S:Tanaka}, we can associate
a canonical normal Cartan geometry to any scalar ODE of order at least $4$
and each system of ODE of order at least $3$. Using the facts on the
normalization condition derived in \S \ref{S:cd-norm}, we can now
express the Wilczynski invariants in terms of the curvature $\kappa$ of this Cartan geometry. We can then prove our main result that in the case of vanishing Wilczynski invariants, the normal Cartan geometry descends to the space of solutions, thus exhibiting Wilczynski--flat equations as forming a C-class.

\subsection{Wilczynski invariants} \label{S:Wilc}

 Normality implies that $\kappa$ takes values in the subspace $\ker(\partial^*) \subset L(\bigwedge^2(\fg/\fp),\fg)$.  The element $\sfX\in\fg_-$ spans a one--dimensional $P$--invariant subspace in $\fg/\fp$.  From Definition \ref{D:C-class}, the C-class property is confirmed if $\kappa$ takes values in the $P$-submodule
 \begin{align} \label{E:bbE}
 \bbE := \{ \phi \in \ker(\partial^*) \subset C^2_{\hor}(\fg,\fg) : i_\sfX \phi = 0 \}.
 \end{align}
 In terms of the vector notation introduced in \S \ref{S:Tanaka}, this corresponds to vectors with vanishing top component. 

Composing the natural surjection $\ker(\partial^*)\to
 \ker(\partial^*)/\im(\partial^*)$ with $\kappa$, we obtain the \textit{essential curvature} $\kappa_e$ of the geometry, which is shown to be a fundamental invariant in Proposition 4.6 of \cite{Cap2016}.  From Corollary \ref{C:red}, we know that this quotient
 representation is completely reducible, which shows that $\kappa_e$ is a much
 simpler geometric object than $\kappa$.

 To describe the relation to the generalized Wilczynski invariants, we
  need some preparation. Inserting the vector $\sfX$ into the curvature function
  $\kappa$, we obtain a function with values in $L(\fg/\fp,\fg)$. By skew symmetry of
  $\kappa$, the values vanish upon insertion of $\sfX$ and thus descend to
  $L(\fg/\fq,\fg)$ and one can further project to $L(\fg/\fq,\fg/\fq)$. Now
  $\fg=\fq\oplus\fa$ and decomposing $L(\fg,\fg)$ accordingly, the result of that 
  projection can be identified with the component of $\kappa(\sfX,\_)$ in
  $L(\fa,\fa)$.

 Now $\fa$ is a representation of $\fq$, so we get $\fq\subset L(\fa,\fa)$. In
 particular $\sfY$ and, more generally, $(\sfY,A)$ for any $A\in\fgl_m$,
   can be viewed as an element of $L(\fa,\fa)$. Forming powers of the map induced by
   $\sfY$, we also get $(\sfY^k,A)\in L(\fa,\fa)$ for $k=2,\dots,n$ and
   $A\in\fgl_m$. The maps $\sfY^k$ for $k=1,\dots,n$ are linearly independent, and we
   will show that the component of $\kappa(\sfX,\_)$ in $L(\fa,\fa)$ actually has
   values in the subspace spanned by $\fq$ and the maps $(\sfY^k,A)$. Hence for each
   $k=1,\dots, n$, we get a well defined component of the curvature function
   determined by $\omega^{\sfX}\otimes\sfY^k$ with values in $\fgl_m$.  We will also
 show that the relevant components survive the projection to
 $\ker(\partial^*)/\im(\partial^*)$ so they are also components of the essential
 curvature function.

\begin{theorem}\label{T:Wilc}
 For a scalar ODE of order at least $4$ and systems of ODEs of order at least $3$, the generalized Wilczynski invariants from Definition \ref{D:Wilc} are
   equivalently encoded as components of the essential curvature function of the
   associated canonical Cartan geometry:
   \begin{itemize}
   \item $\omega^\sfX\otimes \sfY^k$ for $k=2,\dots,n$ in case of a scalar ODE or systems of ODEs of order $n+1$;
   \item additionally, the trace-free part of the component corresponding to $\omega^\sfX\otimes \sfY$ in case of a system of ODEs.
   \end{itemize}

 Vanishing of all generalized Wilczynski invariants is equivalent to
    the fact that the curvature function $\kappa$ has values in the sum
    $\bbE+\im(\partial^*)^1$, where the superscript means (filtration) homogeneity
    $\geq 1$.
\end{theorem}

\begin{proof}
  The key ingredient for this result is the description of Wilczynski
  invariants in terms of a partial connection form from
  \cite{Dou2008}. Given the manifold $\cE$ describing the equation and
  the subbundle $E\subset T\cE$ from \S \ref{S:geo-str}, we can form
  the quotient bundle $N:=T\cE/E$. Given sections $\xi$ of $E$ and $s$
  of $N$, we can choose $\eta\in\fX(\cE)$ which projects onto $s$, and
  project the Lie bracket $[\xi,\eta]$ to $N$. One immediately
  verifies that this gives rise to a well defined bilinear operation
  $D: \Gamma(E)\times\Gamma(N)\to\Gamma(N)$ which defines
  a partial connection, i.e.~it is linear over smooth functions in the
  first variable and satisfies a Leibniz rule in the second
  variable. Hence, we write it as $(\xi,s)\mapsto D_\xi s$.

  Now the canonical Cartan geometry gives rise to a specific
  description of this operation. Denoting by $p:\cG\to\cE$ the Cartan
  bundle and by $\omega$ the Cartan connection, we get
  $T\cE=\cG\times_P(\fg/\fp)$, with $E\subset T\cE$ corresponding to
  the submodule in $\fg/\fp$ spanned by $\sfX+\fp$.  This module is $\fq / \fp$, so $N = \cG\times_P(\fg/\fq)$.  Otherwise put, the bundle $\cG\to\cE$ defines
  a reduction to the structure group $P$ of the (frame bundle of the)
  vector bundle $N\to\cE$, and we can
  describe the partial connection in terms of this reduction.

    First, given a section $s\in\Gamma(N)$ and a lift
    $\eta\in\fX(\cE)$ as above, we can further lift $\eta$ to a
    $P$--invariant vector field $\tilde\eta\in\fX(\cG)$. Then by
    construction, the $P$--equivariant function $\cG\to\fg/\fq$
    corresponding to $s$ is given by $\omega(\tilde\eta)+\fq$. To
    describe the partial connection, take $\xi\in\Gamma(E)$ and choose
    a $P$--invariant lift $\tilde\xi\in\fX(\cG)$. Then the Lie bracket
    $[\tilde\xi,\tilde\eta]$ is a $P$--invariant lift of $[\xi,\eta]$,
    so $\omega([\tilde\xi,\tilde\eta])+\fq$ is the $P$--equivariant
    function corresponding to the projection of $[\xi,\eta]$ and thus
    to $D_\xi s$.

 Since $\omega(\tilde\xi)$ is $\fq$-valued, then by \eqref{E:kappa}, we have
\begin{equation}\label{E:part-conn}
 \omega([\tilde\xi,\tilde\eta]) + \fq = - \kappa(\omega(\tilde\xi),\omega(\tilde\eta)) + \tilde\xi \cdot \omega(\tilde\eta) + [\omega(\tilde\xi),\omega(\tilde\eta)] + \fq.
\end{equation}
 Let us write $f:=\omega(\tilde\eta)+\fq$ for the function
 corresponding to $s$.  The expression
\begin{equation}\label{E:conn-form}
\tau(\tilde\xi)(A+\fq)=-\kappa(\omega(\tilde\xi),A)+[\omega(\tilde\xi),A]+\fq
\end{equation}
 is well-defined since $\omega(\tilde\xi)$ has values
in $\fq$.  Thus we get a partially-defined one--form on $\cG$ with values in
$L(\fg/\fq,\fg/\fq)$ (which is only defined on tangent vectors
projecting to $E\subset T\cE$). In terms of this form, the right hand
side of \eqref{E:part-conn} reads as $\tilde\xi\cdot
f+\tau(\tilde\xi)(f)$, so $\tau$ is exactly the (partial) connection
form for $D$ on $\cG$.

  Now an interpretation of the Wilczynski invariants from \cite{Dou2008} is based on
  a proof that structure group of $N$ can be reduced to $P$ in such a way that one
  obtains a connection form for $D$ that satisfies a normalization condition. This
  condition is that its values lie in the linear subspace of
  $L(\fg/\fq,\fg/\fq)\cong\fa^*\otimes\fa$ spanned by $\fq$ and the
    maps of the form $(\sfY^2,A_2),\dots,(\sfY^n,A_n)$ with $A_i\in\fgl_m$. It is
  then shown in \cite{Dou2008} that the components as listed in the
    Theorem exactly encode the Wilczynski invariants.

  To see that $\tau$ satisfies this normalization condition, observe first that
  $\omega(\tilde\xi)$ has values in $\mathbb R\sfX\subset \fq$, so it suffices to deal with the case that
    $\omega(\tilde\xi)=\sfX$. Also, the bracket--term in \eqref{E:conn-form} gives a
  contribution in $\fq\subset\fa^*\otimes\fa$.  Using the vector notation
  $\binom{\kappa_1}{\kappa_2}$ from \S \ref{S:codiff-explicit} for (the values of)
  $\kappa$, we get $\kappa(\sfX,\_)=\kappa_1$. The values of $\kappa_1$
  lie in $C^1(\fa,\fg)$, which as we know admits a $\fq$--invariant decomposition as
  $C^1(\fa,\fq)\oplus C^1(\fa,\fa)$. Since in \eqref{E:conn-form} we work modulo
  $\fq$, the component of $\kappa$ showing up there is a multiple of the component
  $\kappa_1^{\fa}$ of $\kappa_1$ in $C^1(\fa,\fa)$. But by Proposition \ref{P:hds},
  normality of $\kappa$ implies that $\kappa_1$ has values in
  $\ker(\partial^*_{\fa})$, while $\sfY\cdot\kappa_1$ has values in
  $\im(\partial^*_{\fa})$. By Lemma \ref{L:q-valued}, this means that
  $\kappa_1^{\fa}\in\fq^\perp\subset\fa^*\otimes\fa$ and that
  $\sfY\cdot\kappa_1^{\fa}=0$. This exactly means that the values of $\kappa_1^{\fa}$
  lie in the sum of all those lowest weight spaces of the $\fsl_2$--representation
  $\fa^*\otimes\fa$, which are perpendicular to the submodule $\fq$.
    These lowest weight spaces are spanned by the maps
    $(\sfY,A_1),(\sfY^2,A_2),\dots,(\sfY^n,A_n)$ with $A_1\in\fsl_m$ and
    $A_k\in\fgl_m$ for $k=2,\dots,n$.

  Thus we conclude that $\tau$ satisfies the normalization condition and that the
  Wilczynski invariants are equivalently encoded by the class of
    $\kappa(\sfX,\_)$ modulo $C^1(\fa,\fq)$. In particular, this class and thus
    $\kappa_1^{\fa}$ vanishes identically in the Wilczynski--flat case.

 Having all that in hand, the claims in the theorem now follow from two simple
 observations. On the one hand, Proposition \ref{P:hds} and Lemma \ref{L:q-valued}
 show that the restriction of the $P$--equivariant map
 $\phi=\binom{\phi_1}{\phi_2}\mapsto \phi_1^{\fa}$ to $\ker(\partial^*)\subset
 C^2_{\hor}(\fg,\fg)$ vanishes on the subspace $\im(\partial^*)$. Thus it factorizes
 to the quotient $\ker(\partial^*)/\im(\partial^*)$, which shows that
 $\kappa_1^{\fa}$ and thus the components encoding the Wilczynski
   invariants are components of the essential curvature function $\kappa_e$. This
   also shows that if $\kappa$ has values in $\mathbb E+\im(\partial^*)^1$, then
   $\kappa_e$ vanishes upon insertion of $\sfX$ and hence all Wilczynski invariants
   vanish.

 On the other hand, suppose that we start from a Wilczynski--flat equation, so
 $\kappa = \binom{\kappa_1}{\kappa_2} \in \ker(\partial^*)$ has the property that
 $\kappa_1^{\fa}=0$. Then by Lemma \ref{L:q-valued}, $\kappa_1=\kappa_1^\fq \in
 \im(\partial^*_\fa)$, so we can take an element $\psi_1\in C^2(\fa, \fg)$ such that
 $\partial^*_\fa\psi_1=-\kappa_1$.  Compatibility with homogeneities shows that we
 may assume that $\psi_1$ is homogeneous of degree $\geq 0$.  But then by Proposition
 \ref{P:hds}, $\binom{\kappa_1}{\kappa_2}-\partial^*\binom{\psi_1}{0}$ has vanishing
 top--component and thus lies in $\bbE$.  Hence,
 $\binom{\kappa_1}{\kappa_2}\in\bbE+\im(\partial^*)^1$, which completes the proof.
\end{proof}

 \subsection{The covariant exterior derivative}
 \label{S:d-omega}

 Let $(\cG \to M, \omega)$ be a regular Cartan geometry of type
 $(\fg,P)$. Following \cite{Cap2016}, we consider the operator
 $d^\omega:\Omega^k(\cG,\fg)\to\Omega^{k+1}(\cG,\fg)$ defined by
 \[
 (d^\omega \varphi)(\xi_0,...,\xi_k) = d\varphi(\xi_0,...,\xi_k) +
 \textstyle\sum_{i=0}^k (-1)^i [\omega(\xi_i),
 \varphi(\xi_0,...,\widehat{\xi_i}, ..., \xi_k)],
 \]
 where $\xi_j \in \fX(\cG)$ for all $j$. By Proposition 4.2 of
 \cite{Cap2016}, if $\varphi$ is horizontal and $P$--equivariant, then
 so is $d^\omega\varphi$. Moreover, $d^\omega$ is compatible with the
 natural notion of homogeneity for $\fg$--valued differential forms,
 and the curvature $K$ of $\omega$ satisfies the Bianchi identity
 $d^\omega K = 0$. 

 \subsection{Wilczynski-flat ODE are of C-class}

 Now we are ready to prove our main result.
 
 \begin{theorem} \label{T:Wilc-C} Any Wilczynski-flat ODE
   \eqref{E:ODE} with $m=1$, $n \geq 3$ or $m \geq 2$, $n \geq 2$ is
   of C-class.
 \end{theorem}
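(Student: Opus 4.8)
The plan is to verify the curvature characterization of C--class in Definition \ref{D:C-class} directly: the curvature function $\kappa$ of the canonical regular normal Cartan geometry must satisfy $i_\sfX\kappa = 0$. Writing $\kappa = \binom{\kappa_1}{\kappa_2}$ in the vector notation of \S\ref{S:codiff-explicit}, so that $\kappa_1 = i_\sfX\kappa \in C^1(\fa,\fg)$, the goal is $\kappa_1 = 0$. First I would extract a partial statement from Theorem \ref{T:Wilc}: Wilczynski--flatness gives $\kappa \in \bbE + \im(\partial^*)^1$, and since elements of $\bbE$ have vanishing top component while, by Lemma \ref{L:q-valued}(2), the top component of any element of $\im(\partial^*)$ lies in $\im(\partial^*_\fa) \subseteq C^1(\fa,\fq)$, the $C^1(\fa,\fa)$--part $\kappa_1^\fa$ vanishes. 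Hence $\kappa_1 = \kappa_1^\fq \in C^1(\fa,\fq)$, and since the Spencer differential $\partial_\fa \colon C^1(\fa,\fq)\to C^2(\fa,\fa)$ is injective by Proposition \ref{P:Tanaka}, it suffices to prove $\partial_\fa\kappa_1 = 0$.

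The input needed to kill the remaining $\fq$--valued part must come from the fact that $\kappa$ is a genuine curvature, i.e.\ from the Bianchi identity $d^\omega K = 0$ of \S\ref{S:d-omega}. Decomposing by (filtration) homogeneity, regularity gives $\kappa = \sum_{\ell\ge 1}\kappa^{(\ell)}$, and on the level of the curvature function the homogeneity--zero part of the operator induced by $d^\omega$ is the Lie algebra cohomology differential $\partial_{\fg_-}$. Thus the Bianchi identity reads, in each homogeneity $\ell$,
\[
\partial_{\fg_-}\kappa^{(\ell)} = Q^{(\ell)}\big(\kappa^{(<\ell)}\big),
\]
with right--hand side built quadratically from components of strictly lower homogeneity. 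Reading off the $\fa$--valued part of the top slot via the formula \eqref{E:delg-} for $\partial_{\fg_-}$ yields an identity of the form
\[
\partial_\fa\kappa_1^{(\ell)} = \sfX\cdot\kappa_2^{\fa,(\ell)} + \big(\text{terms in }\kappa^{(<\ell)}\big).
\]
I would then prove $\kappa_1^{(\ell)} = 0$ by induction on $\ell$, the inductive hypothesis serving to control the lower--order terms.

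The crux, and the step I expect to be the main obstacle, is the coupled representation--theoretic analysis forcing $\partial_\fa\kappa_1^{(\ell)} = 0$ at each stage. Alongside the Bianchi relation, homogeneity $\ell$ carries the normality constraints of Proposition \ref{P:hds}, namely $\partial^*_\fa\kappa_1^{(\ell)} = 0$ (automatic once $\kappa_1^{(\ell)}\in C^1(\fa,\fq)$) and $\partial^*_\fa\kappa_2^{(\ell)} = -\sfY\cdot\kappa_1^{(\ell)}$, which couple $\kappa_1^{(\ell)}$ to $\kappa_2^{(\ell)}$. Decomposing $C^*(\fa,\fg)$ into $\fsl_2\times\fgl_m$--isotypic components, on which $\sfX$ and $\sfY$ act as raising and lowering operators, and feeding in $\kappa_1^\fa = 0$ together with the inductive hypothesis, I would show that the only solution of this coupled system in positive homogeneity has $\partial_\fa\kappa_1^{(\ell)} = 0$, whence $\kappa_1^{(\ell)} = 0$ by injectivity of the Spencer differential; this closes the induction and gives $i_\sfX\kappa = 0$. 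The two genuinely delicate points are (i) showing that the quadratic Bianchi corrections $Q^{(\ell)}$ cannot feed a nonzero $\fq$--valued contribution into $\partial_\fa\kappa_1^{(\ell)}$ once the lower homogeneities are known to have vanishing top component, and (ii) the precise $\fsl_2$--weight bookkeeping for the raising term $\sfX\cdot\kappa_2^{\fa,(\ell)}$. The structural fact I would lean on throughout is complete reducibility of $\ker(\partial^*)/\im(\partial^*)$ from Corollary \ref{C:red} --- equivalently, triviality of the $\fg^1$--action, so that the essential curvature $\kappa_e$ is a $G_0$--module object --- exactly as in the BGG arguments of \S 4.9--4.10 of \cite{CSo2017} on which the proof is modeled.
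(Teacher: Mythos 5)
Your overall strategy is the right one, and it matches the paper's in outline: use Theorem \ref{T:Wilc} to place $\kappa$ in $\bbE+\im(\partial^*)^1$, then run an induction on filtration homogeneity driven by the Bianchi identity and normality to upgrade this to $\kappa\in\bbE$. Your opening reduction is also sound: Wilczynski--flatness forces $\kappa_1^{\fa}=0$, hence $\kappa_1\in C^1(\fa,\fq)$, and injectivity of the Spencer differential would finish the job if one could show $\partial_\fa\kappa_1=0$. But the step you yourself flag as ``the crux'' is a genuine gap, not a routine verification, and it is precisely where all the work of the actual proof lives. Two concrete problems. First, the homogeneity--$\ell$ component of the Bianchi identity is \emph{not} of the form $\partial_{\fg_-}\kappa^{(\ell)}=Q^{(\ell)}(\kappa^{(<\ell)})$ with $Q^{(\ell)}$ purely algebraic/quadratic: it also contains covariant derivatives of the lower components, and the inductive hypothesis $i_\sfX\kappa^{(j)}=0$ for $j<\ell$ does not make terms like $\nabla_{\hat\sfX}\kappa_2^{(j)}$ drop out of the top slot. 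Controlling exactly these contributions is the content of the paper's key claim --- that $\partial^*d^\omega$ maps $\bbE$--valued equivariant forms to $\bbE$--valued forms --- which requires the explicit computation \eqref{E:ixd}--\eqref{E:ixd3} together with Lemma \ref{L:q-valued}(3) to see that the curvature insertions $K(\hat\sfX,\xi)$ land in $\fq$ and act trivially. Second, even granting control of the lower--order terms, your top--slot equation reads $-\partial_\fa\kappa_1^{(\ell)}+\sfX\cdot\kappa_2^{(\ell),\fa}=(\text{known})$, which couples the unknown $\kappa_1^{(\ell)}$ to the unknown $\kappa_2^{(\ell)}$ through the raising operator; the normality relation $\partial^*_\fa\kappa_2^{(\ell)}=-\sfY\cdot\kappa_1^{(\ell)}$ couples them back. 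You assert that an $\fsl_2\times\fgl_m$--isotypic analysis shows this system forces $\partial_\fa\kappa_1^{(\ell)}=0$, but you do not carry it out, and it is not at all evident that it closes: this is exactly the kind of case--by--case weight bookkeeping that the BGG machinery is designed to avoid.

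The paper circumvents both difficulties by a homotopy argument rather than a direct isotypic analysis: it keeps the decomposition $K=K_1+K_2$ with $K_1$ valued in $\bbE$ and $K_2$ valued in $\im(\partial^*)^{\ell}$, uses the Bianchi identity to see that $\partial^*d^\omega K_2=-\partial^*d^\omega K_1$ is $\bbE$--valued, and then corrects $K_2$ by $p_\ell(\partial^*d^\omega)\partial^*d^\omega K_2$, where $p_\ell$ is the Cayley--Hamilton polynomial inverting the algebraic Laplacian $\Box$ on $\im(\underline{\partial^*})_\ell$. The identification of the lowest homogeneous component of $\partial^*d^\omega$ with $\underline{\partial^*}\circ\partial_{\fg_-}\circ\tgr_\ell$ then kills the degree--$\ell$ part of the corrected $K_2$, advancing the induction. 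If you want to complete your version, you would either have to reproduce the $\bbE$--preservation computation and then still solve the coupled algebraic system in each homogeneity (which may or may not have only the trivial solution without further input), or adopt the Laplacian--inversion mechanism --- at which point you have reconstructed the paper's proof. As written, the proposal is an accurate roadmap with the decisive step missing. A minor additional point: Corollary \ref{C:red} is not actually what powers the argument here; it enters only in showing that the essential curvature is a completely reducible object, not in the descent proof itself.
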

 
 \begin{proof} Let $(\cG \to \cE, \omega)$ be the regular, normal
   Cartan geometry of type $(\fg,P)$ associated to \eqref{E:ODE} as in
   Theorem \ref{T:G0str-Cartan}. We have to show that for a
   Wilczynski--flat ODE, the curvature function $\kappa$ has values in
   the module $\bbE$ defined in \eqref{E:bbE}. Generalizing the
   relation between the curvature $K$ and the curvature function
   $\kappa$, horizontal $\fg$--valued $k$--forms on $\cG$ can be
   naturally identified with smooth functions $\cG\to
   L(\bigwedge^k(\fg/\fp),\fg)$.  The natural notions of
   $P$--equivariance in the two pictures correspond to each other, see
   Theorem 4.4 of \cite{Cap2016}. For the current proof, it will be
   helpful to switch between forms and equivariant functions freely,
   so we will express the fact that $\kappa$ has values in $\bbE$ as
   ``$K$ lies in $\bbE$''. In these terms, composing functions with
   $\partial^*$ defines a tensorial operator
   $\Omega^k_{\hor}(\cG,\fg)\to\Omega^{k-1}_{\hor}(\cG,\fg)$ for each
   $k$, and we also denote this operator by $\partial^*$. By
   construction, $\partial^*$ maps $P$--equivariant forms to
   $P$--equivariant forms. In this language, normality can be simply
   expressed as $\partial^*K=0$.

   By Theorem \ref{T:Wilc}, Wilczynski--flatness implies that $K$ has
   values in $\bbE+\im(\partial^*)^1$. Passing to equivariant
   functions, applying Lemma 4.7 of \cite{Cap2016}, and passing back
   to differential forms, we conclude that $K=K_1+K_2$ for
   $P$--equivariant forms $K_1,K_2\in\Omega^2_{\hor}(\cG,\fg)$ such
   that $K_1$ has values in $\bbE$ and $K_2$ has values in
   $\im(\partial^*)^1$. Now we prove the theorem in a recursive way by
   showing that for any $\ell\geq 1$, from a decomposition $K=K_1+K_2$
   such that $K_1$ has values in $\bbE$ and $K_2$ has values in
   $\im(\partial^*)^\ell$, we can always obtain a decomposition
   $K=\tilde K_1+\tilde K_2$, for which $\tilde K_1$ again has values
   in $\bbE$ but $\tilde K_2$ has values in
   $\im(\partial^*)^{\ell+1}$. Since $\im(\partial^*)^r=\{0\}$ for
   sufficiently large $r$, this implies the result.
 
   So let us assume that $K=K_1+K_2$ as above with $K_2$ having values
   in $\im(\partial^*)^\ell$ for some $\ell\geq 1$. We first claim
   that for a $P$--equivariant form $\varphi \in
   \Omega^2_{\hor}(\cG,\fg)$, which has values in $\bbE$, also
   $\partial^*d^\omega\varphi$ has values in $\bbE$. In terms of the
   description of $\partial^*$ from Proposition \ref{P:hds}, lying in
   $\bbE$ means that the top component of the right hand side of
   \eqref{E:hds} has to vanish. Denoting by $\hat X\in\fX(\cG)$ the
   vector field characterized by $\omega(\hat X)=\sfX$, we thus have
   to show that (the equivariant function corresponding to) $i_{\hat
     X}d^\omega\varphi$ has values in
   $\ker(\partial^*_\fa)\subset C^2(\fa,\fg)$.

    The assumption on $\varphi$ implies $i_{\hat X}\varphi=0$, so for
   vector fields $\xi,\eta\in\fX(\cG)$, we get
 \begin{equation}\label{E:ixd}
  (i_{\hat X}
  d^\omega\varphi)(\xi,\eta)=d\varphi(\hat X,\xi,\eta)+[X,\varphi(\xi,\eta)].
\end{equation} 
 Using $i_{\hat X}\varphi=0$ once more, we get
\begin{equation}\label{E:ixd2}
\begin{aligned}
  d\varphi&(\hat X,\xi,\eta)=\hat
  X\cdot\varphi(\xi,\eta)-\varphi([\hat
  X,\xi],\eta)-\varphi(\xi,[\hat X,\eta])=\\
  & \hat X\cdot f(\omega(\xi),\omega(\eta))-f(\omega([\hat
  X,\xi]),\omega(\eta))-f(\omega(\xi),\omega([\hat X,\eta])).
\end{aligned}
\end{equation}
 Here $f$ denotes the equivariant function corresponding to $\varphi$,
 which takes values in $\bbE\subset\ker(\partial^*)$.  
 By Proposition \ref{P:hds}, $f$ in fact has values 
 in $\ker(\partial^*_{\fa})\subset C^2(\fa,\fg)$.
 Since $\omega(\hat X)$ is constant, then by \eqref{E:kappa}, 
 \begin{equation}\label{E:ixd3}
  \omega([\hat X,\xi]) =-K(\hat X,\xi)+[X,\omega(\xi)]+\hat X\cdot\omega(\xi), 
 \end{equation}
 and likewise for $\omega([\hat X,\eta])$.

 Now by assumption $K=K_1+K_2$ and $i_{\hat X}K_1=0$, 
 so $K(\hat X,\xi)=\kappa_2(\sfX,\omega(\xi))$ and $\kappa_2$ 
 has values in $\im(\partial^*)$. By Proposition \ref{P:hds}, this means that
 $\kappa_2(\sfX, \_)$ has values in $\im(\partial^*_{\fa})$,
 which by part (3) of Lemma \ref{L:q-valued} is contained in $\fq$. In
 particular, terms of the form $K(\hat X,\xi)$ insert trivially into
 $f$, so these do not contribute. On the other hand, the contribution
 to \eqref{E:ixd2} resulting from the last term in \eqref{E:ixd3} is 
 $$
 -f(\hat X\cdot\omega(\xi),\omega(\eta))-f(\omega(\xi),\hat X\cdot\omega(\eta)). 
 $$ 
 This adds up with the first term in the right hand side of
 \eqref{E:ixd2} to $(\hat X\cdot f)(\omega(\xi),\omega(\eta))$. 
 Since $f$ has values in $\bbE$, the derivative $\hat X\cdot f$
 has the same property. Now inserting the last remaining term in the
 right hand side of \eqref{E:ixd3} into the right hand side of
 \eqref{E:ixd2}, we obtain 
 $$
 -f([X,\omega(\xi)],\omega(\eta))-f(\omega(\xi),[X,\omega(\eta)]). 
 $$
 Viewing $f$ as a function with values in 
 $\ker(\partial^*_{\fa})\subset C^2(\fa,\fg)$ as above, we can write
 the sum of these terms with the last term in the right hand side of
 \eqref{E:ixd} as $(\rho_{\sfX}\circ f)(\omega(\xi),\omega(\eta))$. 
 Here $\rho_{\sfX}$ denotes the natural
 action of $\sfX\in\fq$ on $C^2(\fa,\fg)$. But then $\fq$--equivariance
 of $\partial^*_{\fa}$ as proved in part (1) of Lemma \ref{L:q-valued}
 shows that also this function has values in $\ker(\partial^*_{\fa})$,
 which completes the proof of the claim.

Returning to our decomposition $K=K_1+K_2$, we now use the Bianchi
identity $d^\omega K=0$ to get $\partial^*d^\omega
K_2=-\partial^*d^\omega K_1$, so by the claim, this has values in
$\bbE$. Now consider the maps $\partial_{\fg_-}$ and
$\underline{\partial^*}$ defined on the spaces $C^k(\fg_-,\fg)$ as in
the proof of Proposition \ref{P:codiff}, and the algebraic Laplacian
$\Box:=\partial_{\fg_-}\circ
\underline{\partial^*}+\underline{\partial^*}\circ \partial_{\fg_-}$,
which preserves degrees and homogeneity. This clearly can be
restricted to an endomorphism of $\im(\underline{\partial^*})\subset
C^2(\fg_-,\fg)_\ell$ (on which it coincides with
$\underline{\partial^*}\circ \partial_{\fg_-}$), and in the proof of
Proposition 3.10 of \cite{Cap2016} it is shown that this restriction
is bijective. By the Cayley--Hamilton theorem, there is a polynomial
$p_\ell \in \bbR[x]$ such that $p_\ell(\Box)$ is inverse to $\Box$ on
$\im(\underline{\partial^*})_\ell$.  Now $p_\ell(\partial^* d^\omega)$
is a well-defined operator on the space of $P$--equivariant forms in
$\Omega^2_{\hor}(\cG,\fg)$, which preserves homogeneities.

Applying our claim once more, we see that $p_\ell(\partial^*
d^\omega)\partial^* d^\omega K_2$ has values in $\bbE$ and by
construction is still homogeneous of degree $\geq\ell$. Thus, $\tilde
K_1=K_1+p_\ell(\partial^*d^\omega)\partial^* d^\omega K_2$ has values
in $\bbE$, while $\tilde K_2:=K_2-p_\ell(\partial^*
d^\omega)\partial^* d^\omega K_2$ has values in $\im(\partial^*)$ and
is homogeneous of degree $\geq\ell$. To verify that $K=\tilde
K_1+\tilde K_2$ is the desired decomposition, it suffices to show that
the homogeneous component of degree $\ell$ of (the equivariant
function corresponding to) $\tilde K_2$ vanishes identically.

By part (3) of Theorem 4.4 of \cite{Cap2016}, for a $P$--equivariant
form $\varphi\in\Omega^2(\cG,\fg)$ which is homogeneous of degree
$\geq\ell$ and corresponds to the equivariant function $f$, the
homogeneous component of degree $\ell$ of the equivariant function
corresponding to $\partial^*d^\omega\varphi$ is given by
$\underline{\partial^*}\circ \partial_{\fg_-} \circ\tgr_\ell\circ f$. Applying
this iteratively starting with the function $\kappa_2$ corresponding
to $K_2$, we remain in the realm of functions having values in
$\im(\partial^*)^\ell$. Thus we iteratively conclude that the
homogeneous component of degree $\ell$ of the function corresponding
to $p_\ell(\partial^* d^\omega)\partial^* d^\omega K_2$ coincides with
$p_\ell(\Box)\circ\Box\circ\tgr_\ell\circ\kappa_2=\tgr_\ell\circ\kappa_2$,
which shows that $\tilde K_2$ has vanishing homogeneous component of
degree $\ell$.
\end{proof}

 \begin{example} \label{EX:circles}
 Let $\bu = (u^1,...,u^m)$.  The equations for circles in an $(m+1)$-dimensional Euclidean space are given by
 \[
 \bu''' = 3 \bu'' \frac{\langle \bu', \bu'' \rangle}{1 + \langle \bu', \bu' \rangle}.
 \]
 This ODE system is conformally invariant, and it has been verified that the Wilczynski invariants vanish \cite[Prop.2]{Med2011}.  By our Theorem \ref{T:Wilc-C}, the system is of C-class.  For $m=1$, the equation is (contact) trivializable, but for $m \geq 2$ the system is not (point) trivializable.
 \end{example}

Since the Wilczynski invariants for linear equations are invariants,
it follows that an ODE with trivializable linearizations is Wilczynski--flat. Hence we obtain
\begin{cor}\label{C:main}  Let $n \geq 2$, $m \geq 1$, with $(n,m) \neq (2,1)$.
  Any ODE \eqref{E:ODE} for which the linearization around any solution is
  trivializable, is of C-class.
\end{cor}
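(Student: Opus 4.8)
The strategy is to reduce this directly to Theorem \ref{T:Wilc-C} by checking that trivializability of all linearizations forces the equation to be Wilczynski--flat. The hypotheses on $m$ and $n$ coincide with those of Theorem \ref{T:Wilc-C} (the excluded case being the scalar third--order equation, whose associated geometry is parabolic), so it suffices to verify that all the generalized Wilczynski invariants $\cW_r$ vanish identically.

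Recall from Definition \ref{D:Wilc} that $\cW_r$ is the expression $\Theta_r$ evaluated at the linearization of \eqref{E:ODE} around a solution, and that the $\Theta_r$ are, by the classical results of Wilczynski and Se-ashi, fundamental invariants of linear systems in Laguerre--Forsyth form under the transformations $(t,\bu)\mapsto(\lambda(t),\mu(t)\bu)$ with $\mu(t)\in\tGL(m)$. To say that the linearization around a given solution is trivializable means precisely that it can be brought, by such a transformation, to the trivial linear equation $\bu^{(n+1)}=0$. For the trivial equation all coefficient matrices $P_0,\dots,P_n$ vanish, and since each $\Theta_r$ is a fixed linear combination of the $P_{n-r+j}$ and their derivatives, every $\Theta_r$ vanishes there. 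Invariance of the $\Theta_r$ under the transformation group then guarantees that they are unchanged by the trivializing transformation, so they also vanish when evaluated at the original linearization.

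Since this holds for the linearization around every solution, and solutions foliate $\cE$, all $\cW_r$ vanish identically, so \eqref{E:ODE} is Wilczynski--flat; the conclusion is then immediate from Theorem \ref{T:Wilc-C}. There is no substantive obstacle beyond unwinding the definitions: the only ingredient used from outside the present development is the invariance of the $\Theta_r$ recalled above, which is exactly what allows the vanishing to be transported from the trivial model back to the given linearization.
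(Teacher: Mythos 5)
Your proof is correct and follows the same route as the paper, which disposes of the corollary in one sentence: invariance of the $\Theta_r$ under the admissible transformations means trivializability of each linearization forces all generalized Wilczynski invariants to vanish, and Theorem \ref{T:Wilc-C} then applies. Your write-up merely unwinds this in more detail; there is no substantive difference.
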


\begin{example} \label{EX:n-submax} The ODE $u^{(n+1)} - \frac{n+1}{n} \frac{(u^{(n)})^2}{u^{(n-1)}} = 0$, for $n \geq 3$, is submaximally symmetric \cite[p.206]{Olv1995} except when $n=4$ and $6$.
 At a fixed solution $u$, its linearization is the ODE for $v$ given by
 \begin{align} \label{E:n-lin}
 \ell_u[v] := v^{(n+1)} - \frac{2(n+1)}{n} a v^{(n)} -
 \frac{n+1}{n} a^2 v^{(n-1)} = 0,
 \end{align}
 where $a := \frac{u^{(n)}}{u^{(n-1)}}$.  We have $a' = \frac{a^2}{n}$ and
 hence $(\frac{1}{a})'=-\frac{1}{n}$, $(\frac{1}{a^2})'=-\frac{2}{na}$ and $(\frac{1}{a^2})'' =
 \frac{2}{n^2}$. Defining $\tilde{v} = \frac{v}{a^2}$, we have
 $\tilde{v}^{(n+1)} = \frac{1}{a^2} \ell_u[v] = 0$, so \eqref{E:n-lin}
 is trivializable, and the given ODE is of C-class by Corollary
 \ref{C:main}.  This example is included into a larger family of Wilczynski-flat (hence C-class) equations given in \cite[Example 2]{Dou2008}.
 \end{example}

 \begin{example} \label{EX:n-sys}
 Let $m \geq 2$ and $n \geq 2$.  Given $\bu = (u^1,...,u^m)$, consider
 \[
 \bu^{(n+1)} = \mathbf{f}, \qbox{where} 
 f_i = \begin{cases}
 0, & i\neq m;\\
 ((u^1)^{(n)})^2, & i=m.
 \end{cases}
 \]
 Its linearization is easily seen to be trivializable, so it is of C-class. It is not trivializable since a fundamental invariant does not vanish on it, namely $I_2$ in \cite{DM2014}.  A similar 2nd order example was given in \cite[(5.6a)]{KT2014}, which was known to be of C-class since it is torsion-free \cite{Gro2000}.
 \end{example}


\subsection{Remark: A potential alternative line of argument}
To conclude the article, let us briefly outline how the theory we have
developed could be used to obtain an alternative proof of our main
result. This line of argument is based on correspondence spaces which
are familiar in the case of parabolic geometries. It depends crucially
on the existence of a natural Segr\'e structure on the space of
solutions of a Wilczynski-flat ODE from \cite{Dou2008}, compare with
\S \ref{S:soln-space-geo-str}. As mentioned there, Segr\'e structures
are classical first order structures corresponding to $Q\subset
\tGL(\fa)$. Hence such a structure on a space $S$ comes with a
$Q$--principal bundle $\cG\to S$. The classical way to study such
structures is via the Spencer differential. As we have noted in the
proof of Proposition \ref{P:Tanaka}, this coincides with the
restriction of $\partial_{\fa}$ to a map $C^1(\fa,\fq)\to
C^2(\fa,\fa)$ and is injective. Choosing a $Q$--invariant complement
$\cN\subset C^2(\fa,\fa)$ to the image of the Spencer differential,
there is a canonical principal connection form $\tau$ on $\cG$
characterized by the fact that its torsion lies in
$\cG\times_Q\cN\subset\bigwedge^2T^*S\otimes TS$.

Usually, not too much emphasis is put on the actual choice of $\cN$,
but in the case of Segr\'e structures, this is a surprisingly subtle
issue. Analyzing $\partial_\fa : \fa^* \otimes \fq \to \bigwedge^2 \fa^* \otimes \fa$ in terms of representations of $\fq$, one
easily deduces that there always exist $Q$--invariant complements, but
aside from the $(m,n)=(1,3)$ case, there is always a freedom of
choice.  The larger $m$ and $n$ get, the bigger this freedom becomes,
and while there are always only finitely many free parameters
involved, their number gets arbitrarily high.

Now it turns out that the construction from \S \ref{S:codiff} can
also be used to construct uniform normalization conditions for Segr\'e
structures. Indeed, we can view $L(\bigwedge^2(\fg/\fq),\fg)$ as the
subspace in $C^2(\fg,\fg)$ consisting of all cochains vanishing upon insertion of one element of $\fq$. Similarly as in Lemma \ref{L:codiff} one shows that
this subspace is preserved by $\partial^*$ and that the restriction of
$\partial^*$ to it is $Q$--equivariant. Using a similar adjointness
result as in Proposition \ref{P:codiff}, one shows that
$\cN:=\ker(\partial^*_\fa)\subset C^2(\fa,\fa)$ is a $Q$--invariant
complement to the image of the Spencer differential. 

 Now the alternative approach for proving that Wilczynski--flat ODE
 form a C-class goes as follows. Starting with such an ODE $\cE$,
 form a local space $\cS$ of solutions. As proved in \cite{Dou2008},
 this space of solutions inherits a natural Segr\'e structure. This
 gives rise to a principal $Q$--bundle $\cG\to\cS$, which we may endow
 with the canonical principal connection $\tau\in\Omega^1(\cG,\fq)$
 for the choice $\cN:=\ker(\partial^*_\fa)$ of normalization
 condition. Taking the canonical soldering form $\theta$ on $\cG$,
 which can be viewed as having values in $\fa$, we can form
 $\theta\oplus\tau$, and this defines a Cartan connection $\omega$ of
 type $(\fg,Q)$ on $\cG$.

 The restriction of the principal action of $Q$ defines a free right
 action of $P\subset Q$ on $\cG$, and we can form the correspondence
 space, i.e.\ the space $\cC\cS:=\cG/P$ of orbits. This can be
 identified with the total space of the associated bundle
 $\cG\times_Q(Q/P)$. Of course, $\cG\to\cC\cS$ is a principal
 $P$--bundle and it is easy to verify that $\omega$ also is a Cartan
 connection of type $(\fg,P)$ on $\cG\to\cC\cS$. 

 Guided by what happens for parabolic geometries, we expect that it is possible 
to show that $\cC\cS$ is locally isomorphic
 to $\cE$, so $(\cG,\omega)$ can be locally viewed as a Cartan
 geometry of type $(\fg,P)$ over $\cE$. This isomorphism is expected to have
 the property that the underlying filtered $G_0$--structure of this
 Cartan geometry is the given structure on $\cE$. From our choice of
 normalization conditions it follows that $\omega$ is also normal (in
 the sense used in this article) as a Cartan connection of type
 $(\fg,P)$. Uniqueness of the normal Cartan geometry implies that
 $(\cG,\omega)$ is locally isomorphic to the canonical Cartan geometry
 on $\cE$ and by construction it descends to the local space $\cS$ of
 solutions, which would complete the argument.

 \section*{Acknowledgements}
 
 The first and third authors were respectively supported by projects
 P27072-N25 and M1884-N35 of the Austrian Science Fund (FWF).
 D.T. was also supported by the Troms\o{} Research Foundation.


\begin{thebibliography}{1}

 \bibitem{Bry1991} R. Bryant, Two exotic holonomies in dimension four,
   path geometries, and twistor theory, Proc. Symp. Pure Math. {\bf
     53} (1991), 33--88.

 \bibitem{CD2001} D.M.J. Calderbank, T. Diemer, Differential
   invariants and curved Bernstein--Gelfand--Gelfand sequences, J.
   reine angew. Math. {\bf 537} (2001), 67--103.

 \bibitem{Cap2005} A. \v{C}ap, Correspondence spaces and twistor spaces for parabolic geometries, J. Reine Angew. Math. {\bf 582} (2005), 143--172.
 
 \bibitem{Cap2016} A. \v{C}ap, On canonical Cartan connections associated to filtered G-structures, preprint arXiv:1707.05627 (2017).
 
 \bibitem{CS2009} A. \v{C}ap, J. Slov\'ak, Parabolic Geometries I:
   Background and General Theory, American Mathematical Society, 2009.

 \bibitem{CSS2001} A. \v{C}ap, J. Slov\'ak, V. Sou\v{c}ek,
   Bernstein--Gelfand--Gelfand sequences, Ann. Math. {\bf 154} (2001),
   97--113.

 \bibitem{CSo2017} A. \v{C}ap, V. Sou\v{c}ek, Relative BGG sequences;
   II. BGG machinery and invariant operators, 
   Adv. Math. {\bf 320} (2017) 1009--1062.
   
 \bibitem{Car1938} \'E.\ Cartan, Les espaces g\'en\'eralis\'es et l'int\'egration de certaines classes d'\'equations diff\'erentielles, C.R., 1938, V.206, N.23, 1689--1693.
 
 \bibitem{Dou2001} B.\ Doubrov, Contact trivialization of ordinary differential equations, Differential geometry and its applications. Proceedings of the 8th international conference, Opava, Czech Republic, August 27--31, 2001. Math. Publ. (Opava) {\bf 3} (2001), pp. 73--84.

 \bibitem{Dou2008} B.\ Doubrov, Generalized Wilczynski invariants for nonlinear ordinary differential equations, The IMA Volumes in Mathematics and its Applications {\bf 144} (2008), 25--40.

 \bibitem{DKM1999} B.\ Doubrov, B. Komrakov, T. Morimoto, Equivalence of holonomic differential equations, Lobachevskij Journal of Mathematics {\bf 3} (1999), 39--71.
 
 \bibitem{DM2014} B.\ Doubrov, A.\ Medvedev, Fundamental invariants of systems of ODEs of higher order. Differential Geom. Appl. {\bf 35} (2014), suppl., 291--313.
 
 \bibitem{DG2012} M. Dunajski, M. Godli\'nski, $GL(2,\mathbb{R})$ structures, $G_2$ geometry and twistor theory, Q. J. Math. {\bf 63}(2012), no. 1, 101--132.

 \bibitem{DT2006} M. Dunajski, P. Tod, Paraconformal geometry of $n$-th order ODEs, and exotic holonomy in dimension four, J. Geom. Phys. 56 (2006) 1790--1809.
 
 \bibitem{Fels1995} M.E. Fels, The equivalence problem for systems of second-order ordinary differential equations, Proc. London Math. Soc. (3) {\bf 71} (1995), 221--240.
  
 \bibitem{GN2010} M. Godlinski, P. Nurowski, $\tGL(2,\mathbb{R})$ geometry of ODE's, J. Geom. Phys. {\bf 60} (2010), no. 6-8, 991--1027.
 
 \bibitem{Gro2000} D. Grossman, Torsion-free path geometries and integrable second order ODE systems, Selecta Math. {\bf 6}, no. 4 (2000), 399--442.

 \bibitem{KN1964} S.\ Kobayashi, T.\ Nagano, On filtered Lie algebras and geometric structures. I. J. Math. Mech. {\bf 13} (1964), 875--907.

 \bibitem{KN1965} S.\ Kobayashi, T.\ Nagano, On filtered Lie algebras and geometric structures. III. J. Math. Mech. {\bf 14} (1965), 679--706.

 \bibitem{KT2014} B.\ Kruglikov, D.\ The, The gap phenomenon in parabolic geometries. J. Reine Angew. Math. {\bf 2017}(723), 153--215 (2014).

 \bibitem{Kry2010} W.\ Kry\'nski, Paraconformal structures and differential equations. Differential Geom. Appl. {\bf 28} (2010), no. 5, 523--531.
 
 \bibitem{Med2011} A.\ Medvedev, Third order ODEs systems and its characteristic connections, SIGMA {\bf 7} (2011), Paper 076, 15 pp.
 
 \bibitem{Mor1993} T. Morimoto, Geometric structures on filtered manifolds, Hokkaido Math. J. {\bf 22} (1993), 263--347.

 \bibitem{Nur2009} P. Nurowski, Comment on $\operatorname{GL}(2,\mathbb{R})$ geometry of fourth-order ODEs, J. Geom. Phys. {\bf 59} (2009), no. 3, 267--278.
 
 \bibitem{Olv1995} P.J. Olver, Equivalence, Invariants and Symmetry, Cambridge University Press, 1995.
 
 \bibitem{Seashi1988} Y. Se-ashi, On differential invariants of integrable finite type linear differential equations, Hokkaido Math. J. {\bf 17} (1988), 151--195.
  
 \bibitem{Tanaka2017a} N. Tanaka, K. Kiyohara, T. Morimoto, K. Yamaguchi, Geometric Theory of Systems of Ordinary Differential Equations I, Hokkaido University Technical Report Series in Mathematics, no. 169, March 2017, 1--169. \href{http://doi.org/10.14943/81531}{http://doi.org/10.14943/81531}
 
 \bibitem{Tanaka2017b} N. Tanaka, K. Kiyohara, T. Morimoto, K. Yamaguchi, Geometric Theory of Systems of Ordinary Differential Equations II, Hokkaido University Technical Report Series in Mathematics, no. 170, March 2017. \href{http://doi.org/10.14943/81532}{http://doi.org/10.14943/81532}
 
 \bibitem{Wilc1905} E.J. Wilczynski, Projective differential geometry of curves and ruled surfaces, Leipzig, Teubner, 1905.
  
 \end{thebibliography}
 \end{document}